\renewcommand{\epsilon}{\varepsilon}
\theoremstyle{plain}
\newtheorem{theorem}{Theorem}
\newtheorem{lemma}[theorem]{Lemma}
\newtheorem{corr}[theorem]{Corollary}
\newtheorem{prop}[theorem]{Proposition}
\newtheorem{deff}[theorem]{Definition}
\newtheorem{remark}[theorem]{Remark}
\newtheorem{conj}[theorem]{Conjecture}
\newcommand{\bth}{\begin{theorem}}
\newcommand{\ble}{\begin{lemma}}
\newcommand{\bcor}{\begin{corr}}
\newcommand{\bdeff}{\begin{deff}}
\newcommand{\bprop}{\begin{proposition}}
\newcommand{\ele}{\end{lemma}}
\newcommand{\ecor}{\end{corr}}
\newcommand{\edeff}{\end{deff}}
\newcommand{\eprop}{\end{proposition}}
\newcommand{\la}{\lambda}
\newcommand{\eps}{\varepsilon}
\newcommand{\supp}{\text{supp }}
\renewcommand{\Pi}{\varPi}
\renewcommand{\epsilon}{\varepsilon}
\newcommand{\ls}{\lesssim}
\newcommand{\gs}{\gtrsim}
\newcommand{\1}{{\rm 1\hspace*{-0.4ex}%
		\rule{0.1ex}{1.52ex}\hspace*{0.2ex}}}
\numberwithin{equation}{section}
\begin{document}
		\title[Logvinenko-Sereda sets and Carleson measures]{ $L^p$-Logvinenko-Sereda sets and $L^p$-Carleson measures on compact manifolds}
		\keywords{Carleson measure, Logvinenko-Sereda set, eigenfunction, heat kernel}
		\subjclass[2010]{35P99, 58C35, 58C40}
\author{Xing Wang, Xiangjin Xu and Cheng Zhang}
\address{School of Mathematics, Hunan University, Changsha, HN 410012, China}
\email{xingwang@hnu.edu.cn}

\address{Department of Mathematics and Statistics, Binghamton University-SUNY, Binghamton, NY, 13902, USA}
	\email{xxu@math.binghamton.edu}

	\address{Yau Mathematical Sciences Center,
	Tsinghua University,
	Beijing, BJ 100084, China}
\email{czhang98@tsinghua.edu.cn}
		\begin{abstract}
		Marzo and Ortega-Cerd\`a \cite{MO08} gave geometric characterizations for $L^p$-Logvinenko-Sereda sets on the standard sphere for all $1\le p<\infty$. Later, Ortega-Cerd\`a and Pridhnani \cite{OCP13} further investigated $L^2$-Logvinenko-Sereda sets and $L^2$-Carleson measures on compact manifolds without boundary. In this paper, we  characterize $L^p$-Logvinenko-Sereda sets and $L^p$-Carleson measures on compact manifolds with or without boundary for all $1<p<\infty$.   Furthermore, we investigate $L^p$-Logvinenko-Sereda sets and $L^p$-Carleson measures for eigenfunctions on compact manifolds without boundary, and we completely characterize them on the standard sphere $S^m$ for $p > \frac{2m}{m-1}$. For the range $p < \frac{2m}{m-1}$, we conjecture that $L^p$-Logvinenko-Sereda sets for eigenfunctions on the standard sphere $S^m$ are characterized by the tubular geometric control condition and we provide some evidence. These results provide new progress on an open problem raised  by Ortega-Cerd\`a and Pridhnani.  
		\end{abstract}
		\maketitle

\section{Introduction}

		Let $(M,g)$ be a smooth connected compact  Riemannian manifold with boundary $\partial M$. The dimension of $M$ is $m\ge2$. The boundary $\partial M$ is smooth and can be empty.  If $\partial M\neq \emptyset$ and  $\nu$ is the outward unit normal vector field along $\partial\Omega$, we assume the boundary condition is either  Dirichlet ($u|_{\partial\Omega}=0$) or Neumann ($\partial_\nu u|_{\partial\Omega}=0$).  Then  the Laplacian operator $-\Delta$ on $M$ is nonnegative, self-adjoint and has discrete spectrum $\{\la_j^2\}$ where $0\le \la_1\le \la_2\le ...$ are arranged in increasing order and we account for
		multiplicity. For $\la\ge1$, let $E_\la$ be the subspace of $L^2(M)$ generated by eigenfunctions of eigenvalues bounded by $\la^2$, namely
		\[E_\la=\Big\{\sum_{\la_j\le \la}\alpha_je_j:\alpha_j\in \mathbb{C},\ -\Delta e_j=\la_j^2e_j\Big\}\]
		where  $\{e_j\}_j$ is an orthonormal eigenbasis corresponding to  $\{\la_j\}_j$. 
		
		Let $1\le p<\infty$. The question is to characterize the measures $\mu=\{\mu_\la\}_\la$ such that for all $\la\ge1$ and $f\in E_\la$,
		\begin{equation}\label{goal}
			\int_M|f|^pd\mu_\la \approx \int_M|f|^pdV.
		\end{equation}
		Let $B(z,r)$ be the geodesic ball in $M$ centered at $z$ of radius $r$.  Heuristically, since the function $f$ is frequency-localized  in a ball of radius $\la$, by the uncertainty principle it behaves like a constant in every ball of radius $1/\la$ in the physical space. Thus, it is natural to introduce the following conditions, which will be useful to characterize the measures satisfying \eqref{goal}.

		We call $\mu$ is \textbf{relatively dense} if there exist $r>0$ and $\rho>0$ such that for all $\la\ge1$ and $z\in M$,
		\begin{equation}\label{mudense}
			\frac{\mu_\la(B(z,r/\la))}{\text{vol}(B(z,r/\la))}\ge \rho.
		\end{equation}
		 	We call $\mu$ is \textbf{relatively sparse} if there exist $r>0$ and $C>0$ such that for all $\la\ge1$ and $z\in M$,
		 \begin{equation}\label{musparse}
		 	\frac{\mu_\la(B(z,r/\la))}{\text{vol}(B(z,r/\la))}\le C.
		 \end{equation}

		 First, let us focus on the special case where $d\mu_\la=\1_{A_\la}dV$ where $\1_{A_\la}$ is the indicator function of the set $A_\la\subset M$.	Let $\mathcal{A}=\{A_\la\}_\la$ be a sequence of sets in $M$. We say that $\mathcal{A}$ is \textbf{$L^p$-Logvinenko-Sereda}  if there exists  $C>0$ such that for all $\la\ge1$ and $f\in E_\la$,
		 	\begin{equation}\label{LSdef}
		 	\int_{A_\la}|f|^pdV\ge  C	\int_M |f|^pdV.\end{equation}
		 		 We say that $\mathcal{A}$ is relatively dense if there exist $r>0$ and $\rho>0$ such that for all $\la\ge1$ and $z\in M$,
		 		\begin{equation}\label{RDdef}
		 			\frac{{\rm vol}(A_\la\cap B(z,r/\la))}{{\rm vol}(B(z,r/\la))}\ge\rho.
		 		\end{equation}
	By definition, $\mathcal{A}$ is relatively dense if and only if $\mu$ is relatively dense.

Our first result is  that  the relatively dense condition actually  characterizes $L^p$-Logvinenko-Sereda  sets on compact manifolds with or without boundary.

\begin{theorem}\label{LSthm}
	For $1<p<\infty$, the sequence of sets $\mathcal{A}=\{A_\la\}_\la$ is $L^p$-Logvinenko-Sereda  if it is relatively dense. For $1\le p<\infty$, the converse is true.
\end{theorem}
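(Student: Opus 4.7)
My plan is to treat the two implications separately with quite different techniques.

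\textit{Sufficiency ($1<p<\infty$).} The key local input is a reverse-H\"older inequality of Remez type: for every $f\in E_\la$, every ball $B=B(z,r/\la)$ at the uncertainty scale, and every measurable $E\subset B$ with $\text{vol}(E)/\text{vol}(B)\ge\rho$, one should have
\[
\int_B|f|^p\,dV \ls C(p,r,\rho)\int_E|f|^p\,dV.
\]
I would derive this by rescaling $B$ to unit size through geodesic normal coordinates and approximating the rescaled function by an algebraic polynomial of degree $O(\la r)$---using, say, a Hadamard parametrix for the wave equation or a smooth spectral cutoff to realize $f$ locally as a frequency-$O(\la)$ object---and then invoking the classical polynomial Remez inequality on Euclidean balls. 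Given this local estimate, I cover $M$ by a finitely-overlapping family $\{B_i\}$ of balls of radius $r/\la$ with the same $r,\rho$ supplied by the relative density hypothesis, take $E_i:=A_\la\cap B_i$, and sum:
\[
\int_M|f|^p\,dV\le\sum_i\int_{B_i}|f|^p\,dV \ls \sum_i\int_{A_\la\cap B_i}|f|^p\,dV \ls \int_{A_\la}|f|^p\,dV,
\]
yielding \eqref{LSdef} with a constant independent of $\la$.

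\textit{Necessity ($1\le p<\infty$).} I would test \eqref{LSdef} against a smoothly spectrally localized reproducing kernel at an arbitrary base point $z_0\in M$. Fix $\psi\in\Coi([0,1))$ with $\psi(0)=1$ and set
\[
K_\la(x,z_0)=\sum_j\psi(\la_j/\la)\,e_j(x)\,\overline{e_j(z_0)},
\]
so that $K_\la(\cd,z_0)\in E_\la$. Standard parametrix/heat-kernel estimates give $K_\la(z_0,z_0)\approx\la^m$, the uniform bound $\|K_\la(\cd,z_0)\|_\infty\ls\la^m$, and rapid off-diagonal decay
\[
|K_\la(x,z_0)|\le C_N\la^m(1+\la\,d(x,z_0))^{-N}
\]
for every $N$. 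Integrating yields $\|K_\la(\cd,z_0)\|_{L^p(M)}^p\approx\la^{m(p-1)}$, with the contribution of $M\setminus B(z_0,R/\la)$ at most $\eta(R)\,\la^{m(p-1)}$, where $\eta(R)\to0$ uniformly in $\la,z_0$. Inserting $K_\la(\cd,z_0)$ into \eqref{LSdef}, choosing $R$ large enough to swallow the tail, and estimating $|K_\la|^p\le C\la^{mp}$ on $A_\la\cap B(z_0,R/\la)$ produces
\[
\la^{m(p-1)}\ls \la^{mp}\,\text{vol}\bigl(A_\la\cap B(z_0,R/\la)\bigr),
\]
which is \eqref{RDdef} with $r=R$ and a suitable $\rho$ depending only on $p$, $R$, and the LS constant.

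\textit{Main obstacle.} The delicate step is the Remez-type inequality on curved balls of radius $1/\la$: Remez is classically a statement about polynomials on Euclidean balls, whereas $E_\la$ is only an approximate polynomial space at this scale, with curvature and (in the boundary case) boundary contributions entering at lower order. I expect to absorb the parametrix error into a slight enlargement of the ball---working on $B(z,2r/\la)$---and to track the explicit polynomial dependence of constants on $r,\rho,p$. This is also the step whose constants degenerate as $p\to1$, accounting for the restriction $1<p$ on the sufficiency side, whereas the kernel-based necessity argument is insensitive to the endpoint $p=1$.
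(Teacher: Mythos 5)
Your necessity argument is essentially the paper's own proof: the paper also tests \eqref{LSdef} against $\psi(L/\la^2)(\cdot,y)$, uses the complex-time heat kernel bounds for the off-diagonal decay and the on-diagonal lower bound to see that the kernel behaves like $\la^m\1_{B(y,c/\la)}$. The one point you skip is that on a manifold with Dirichlet boundary the on-diagonal lower bound $K_\la(z_0,z_0)\approx\la^m$ fails for $z_0$ within distance $O(1/\la)$ of $\partial M$; the paper invokes Seeley's estimates to get the lower bound only for $d(z_0,\partial M)\ge C/\la$ and then observes that relative density at such centers (with radius $r/\la$) yields relative density at all centers with radius $(r+C)/\la$. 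This is a fixable omission, and you should also take $\psi\ge\1_{[0,1/2]}$ rather than merely $\psi(0)=1$ to guarantee the diagonal lower bound.

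The sufficiency argument, however, has a genuine gap: the uniform local Remez inequality $\int_B|f|^p\,dV\le C(p,r,\rho)\int_E|f|^p\,dV$ for \emph{every} ball $B$ at scale $r/\la$ and every $f\in E_\la$ is false. After rescaling $B(z,r/\la)$ to unit size, $f$ becomes (approximately) an entire function of exponential type $O(r)$, but that class contains all polynomials of arbitrary degree restricted to the unit ball, and the Remez constant necessarily grows with the degree. Concretely, a function in $E_\la$ with a zero of order $N$ at $z$ (on $S^2$, think of $(x_1+ix_2)^N$; on $\R$, of $(\sin(x/N)/(x/N))^N$ near $x=N\pi$) behaves on $B(z,r/\la)$ like $d(x,z)^N$, and choosing $E$ to be the inner half of $B$ makes the ratio $\int_E|f|^p/\int_B|f|^p$ as small as $4^{-Np}$. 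So no constant independent of $f$ exists, and the difficulty is not the parametrix error you propose to absorb, but the fact that band-limited functions are only "locally polynomial of bounded degree" at points where they are not locally degenerate. Every successful proof (Kovrijkine on $\R^n$, Luecking, Ortega-Cerd\`a--Pridhnani, and this paper) first isolates a good set $D$ of points where $|f(w)|^p$ dominates the local average of (the harmonic extension of) $|f|^p$ over $B(w,r/\la)\times[-r/\la,r/\la]$, shows via the multiplier bound of Lemma \ref{Lplemma} that $M\setminus D$ carries a negligible fraction of $\|f\|_{L^p}^p$, and proves the local comparison only on $D$ — in this paper by a rescaling/compactness argument producing a limiting Euclidean-harmonic function that would vanish on a set of Hausdorff dimension $m$. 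Your claimed restriction to $p>1$ also has the wrong source: it comes from the failure of the $L^p$ spectral multiplier bound at $p=1$, not from degeneration of Remez constants. Finally, note the paper must do genuinely extra work for $\partial M\neq\emptyset$ (a Bernstein-based boundary-layer estimate in the Dirichlet case, and a new Neumann gradient estimate for harmonic functions), which any covering-by-balls scheme would also have to confront.
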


The classical Logvinenko-Sereda  theorem describes this type of comparable norms for functions in the Paley-Wiener space, which consists of functions in $L^p(\mathbb{R}^m)$ whose  Fourier transform is supported  in a prefixed bounded set in $\mathbb{R}^m$. See Logvinenko-Sereda \cite{LS74}, Havin-Jöricke \cite[p. 112-116]{HJ94}. Luecking \cite{L83} studied this notion in Bergman spaces. 
Following his idea, Marzo and Ortega-Cerd\`a \cite{MO08} characterized $L^p$-Logvinenko-Sereda sets on the sphere for all $1\le p<\infty$. Later, Ortega-Cerd\`a and Pridhnani \cite{OCP13} characterized $L^2$-Logvinenko-Sereda sets on compact manifolds without boundary. Our Theorem \ref{LSthm} completely characterizes $L^p$-Logvinenko-Sereda sets on compact manifolds with or without boundary for all $1<p<\infty$.

Next, let $1\le p<\infty$ and let $\mu=\{\mu_\la\}_\la$ be a sequence of measures on $M$. We say that $\mu$ is  \textbf{$L^p$-Carleson}  if there exists $C>0$ such that for all $\la\ge1$ and $f\in E_\la$,
\begin{equation}\label{cardef}
	\int_M |f|^pd\mu_\la\le C\int_M|f|^pdV.
\end{equation}

Similarly, we may characterize the $L^p$-Carleson measures by the relatively sparse condition. Nevertheless, it is sensitive to the boundary condition.
\begin{theorem}\label{carthm}
For $1<p<\infty$,  the sequence of measures $\mu=\{\mu_\la\}_\la$ is $L^p$-Carleson  if it is relatively sparse. For $1\le p<\infty$, the converse is true when the boundary is empty or the  boundary condition is Neumann, but the converse is false when the  boundary condition is Dirichlet.
\end{theorem}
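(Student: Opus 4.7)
The plan is to establish Theorem \ref{carthm} in three stages, mirroring its three assertions.

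\textbf{Stage 1 (sufficiency, $1<p<\infty$).} The pivotal tool is a local Bernstein-type reverse H\"older estimate: for every $f\in E_\la$ and every geodesic ball $B=B(z,r/\la)$,
\begin{equation*}
\|f\|_{L^\infty(B)}^p \ls \frac{1}{\mathrm{vol}(B)}\int_{B^*}|f|^p\,dV,
\end{equation*}
where $B^*=B(z,Cr/\la)$. This follows from writing $f=\chi(\sqrt{-\Delta}/\la)f$ for a smooth $\chi$ supported in $[-1,1]$ with $\chi\equiv 1$ on $[-1/2,1/2]$, and recognising that the Schwartz kernel of this operator has size $\la^m$ and is essentially supported in $\{|x-y|\ls 1/\la\}$ by finite propagation speed of the wave equation $\cos(t\sqrt{-\Delta})$ (which holds equally well for empty, Neumann, or Dirichlet boundary conditions, with the usual reflections). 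Cover $M$ by a family $\{B_j=B(z_j,r/\la)\}$ of bounded overlap; then
\begin{equation*}
\int_{B_j}|f|^p\,d\mu_\la \le \mu_\la(B_j)\|f\|_{L^\infty(B_j)}^p \ls \mathrm{vol}(B_j)\cdot \frac{1}{\mathrm{vol}(B_j)}\int_{B_j^*}|f|^p\,dV,
\end{equation*}
using relative sparseness in the first inequality, and summing in $j$ gives the Carleson bound.

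\textbf{Stage 2 (necessity for empty/Neumann boundary, $1\le p<\infty$).} For each $z\in M$ I test against the concentrated function
\begin{equation*}
f_z(x)=\sum_{\la_j\le \la}\chi(\la_j/\la)\,e_j(x)\overline{e_j(z)}\in E_\la,
\end{equation*}
i.e.\ the diagonal slice of the kernel of $\chi(\sqrt{-\Delta}/\la)$. By the pointwise Weyl law, which remains valid up to the boundary in the Neumann (or empty boundary) case, $f_z(z)\gs \la^m$; a gradient bound on the kernel then gives $|f_z(x)|\gs \la^m$ on a ball $B(z,c/\la)$ for some uniform $c>0$. The Schwartz decay of $\chi$, transported by finite propagation speed, yields $\|f_z\|_{L^p(M)}^p\ls \la^{m(p-1)}$. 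Feeding $f_z$ into the Carleson hypothesis,
\begin{equation*}
\la^{mp}\mu_\la(B(z,c/\la))\ls \int_M|f_z|^p\,d\mu_\la \ls \|f_z\|_{L^p}^p \ls \la^{m(p-1)},
\end{equation*}
so $\mu_\la(B(z,c/\la))\ls \la^{-m}\sim \mathrm{vol}(B(z,c/\la))$, which is exactly relative sparseness.

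\textbf{Stage 3 (Dirichlet counterexample).} Fix any $z_0\in\partial M$ and set $\mu_\la=\delta_{z_0}$ for every $\la\ge 1$. Under Dirichlet conditions every $f\in E_\la$ vanishes on $\partial M$, so $\int_M|f|^p\,d\mu_\la=|f(z_0)|^p=0$ and $\mu$ is trivially $L^p$-Carleson. Nevertheless $\mu_\la(B(z_0,r/\la))=1$ while $\mathrm{vol}(B(z_0,r/\la))\to 0$ as $\la\to\infty$, so \eqref{musparse} fails and $\mu$ is not relatively sparse.

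\textbf{Main obstacle.} The delicate point is Stage 2: producing the test function $f_z$ as a genuine element of $E_\la$ and, crucially, securing the pointwise lower bound $f_z(z)\gs \la^m$ \emph{uniformly} in $z\in M$, including boundary points in the Neumann case. This requires the on-diagonal asymptotics of the Neumann spectral projector (or the heat kernel at small time $\la^{-2}$) at the boundary, for which the leading-order Weyl term survives but the standard interior parametrix arguments must be replaced by a boundary parametrix / reflection argument. The same kernel machinery, which must deliver sharp pointwise and gradient bounds on scale $1/\la$, also underlies the reverse H\"older estimate in Stage 1, so it is natural to develop these kernel estimates once and apply them in both directions.
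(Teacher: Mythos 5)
Your overall architecture (kernel of a spectral cutoff as a concentrated test function for necessity; a Dirac mass at a Dirichlet boundary point for the counterexample) matches the paper, and Stages 2 and 3 are essentially the paper's argument. Stage 1, however, takes a genuinely different route: the paper proves sufficiency by passing to the harmonic extension $h(z,t)=\sum\alpha_je^{\la_jt}e_j(z)$, applying the $L^p$ mean-value inequality \eqref{mv} for harmonic functions on $M\times[-1,1]$, Fubini, relative sparseness, and then the $L^p$-boundedness of the multiplier $e^{ts}\psi(s/\la)$ (Lemma \ref{Lplemma}) to return to $\|f\|_{L^p}$; you instead use the reproducing identity $f=\chi(\sqrt{-\Delta}/\la)f$ and a pointwise kernel bound to get a local sup-bound, then sum over a bounded-overlap cover. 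Your route is more elementary in that it avoids the harmonic extension and the multiplier theorem (and would in fact extend to $p=1$, which the paper does not claim), at the price of needing sharp off-diagonal kernel decay on manifolds with boundary.

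Two points need repair. First, in Stage 1 the kernel of $\chi(\sqrt{-\Delta}/\la)$ with $\chi$ compactly supported is \emph{not} supported in $\{d(x,y)\ls 1/\la\}$: finite propagation speed applies to $\cos(t\sqrt{-\Delta})$ with $\hat\chi$ compactly supported, which is incompatible with $\supp\chi$ compact. What is true is the rapid off-diagonal decay $|K_\la(x,y)|\le C_N\la^m(1+\la d(x,y))^{-N}$, which the paper obtains (Corollary \ref{smoothker}) from complex-time heat kernel bounds precisely because the wave parametrix is problematic near the boundary. With this decay your reverse H\"older inequality as stated (with a fixed dilate $B^*$) is false, but the summation still closes: after H\"older, $\|f\|_{L^\infty(B_j)}^p\ls\la^m\int_M(1+\la d(z_j,y))^{-N}|f(y)|^p\,dV(y)$, and $\sum_j(1+\la d(z_j,y))^{-N}\ls1$ for a maximal $r/\la$-separated family of centers and $N>m$.

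Second, and more substantively, Stage 2 hinges on the uniform lower bound $f_z(z)\gs\la^m$ for all $z\in M$, including points within distance $O(1/\la)$ of the boundary in the Neumann case. You correctly identify this as the main obstacle but do not close it; note that Seeley's pointwise Weyl law (Lemma \ref{slem}) has an error term $\delta^{-3/2}\la^{m-3/2}$ that degenerates exactly in this boundary layer, so ``the pointwise Weyl law remains valid up to the boundary'' cannot be invoked as stated. The paper's resolution is not a boundary parametrix but the conservation property $\int_Mp(t,x,y)\,dV(y)=1$ (valid for empty boundary and Neumann, and the precise reason the converse fails for Dirichlet): combined with the semigroup property, Cauchy--Schwarz and the Gaussian upper bound, it yields $p(t,x,x)\gs t^{-m/2}$ and hence $\sum_{\la_j\le\la}|e_j(x)|^2\gs\la^m$ uniformly in $x$ (Corollary \ref{corlow}). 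You should substitute this (or supply the reflection argument in full) to make Stage 2 complete.
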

 On compact  manifolds without boundary, Ortega-Cerd\`a and Pridhnani \cite{OCP13} proved that the relatively sparse condition is sufficient and necessary for $\mu$ to be $L^2$-Carleson. Later, Xu \cite{xu21} studied this problem on compact manifolds with boundary, and observed that the relatively sparse condition is not necessary for $L^2$-Carleson under Dirichlet boundary condition. We observe that due to   the conservation property of the heat kernel \eqref{conser},  the relatively sparse condition is necessary for Carleson measures in  the boundaryless case and the Neumann case.
 
Furthermore, we investigate the problem with  $E_\la$ replaced by the $\la$-eigenspace. This open problem was raised by Ortega-Cerd\`a and Pridhnani in \cite[Remark 5.12]{OCP13}. It is  much more complicated, so we  only consider  compact manifolds without boundary for now. We say that $\mathcal{A}=\{A_\la\}_\la$ is \textbf{$L^p$-Logvinenko-Sereda for eigenfunctions} on $M$  if there exists $C>0$ such that for all  $\la\ge1$ and eigenfunctions $e_\la$ satisfying $-\Delta e_\la=\la^2e_\la$ we have \begin{equation}\label{LSdef2}
	\int_{A_\la}|e_\la|^pdV\ge C\int_M |e_\la|^pdV.
\end{equation}
In other words, every eigenfunction $e_\la$ is $L^p$-concentrated on $A_\la$. For example, on the compact hyperbolic surface $M$, every eigenfunction  is $L^2$-concentrated on any fixed nonempty open set $\Omega\subset M$.  See e.g. Dyatlov-Jin \cite{DJ18}, Dyatlov-Jin-Nonnenmacher \cite{DJ21}. By Theorem \ref{LSthm}, the relatively dense condition  is sufficient for $\mathcal{A}$ to be  $L^p$-Logvinenko-Sereda  for eigenfunctions for all $1<p<\infty$, but it is not necessary for some $p$. See \cite[Remark 5.12]{OCP13} for a counterexample when $p=1$.   More precisely, we can show that it is not necessary whenever  $p$ is small. 

\begin{prop}\label{smallpls}
	For any $1\le p<\frac{2m}{m-1}$, the relatively dense condition  is not necessary for $\mathcal{A}$ to be  $L^p$-Logvinenko-Sereda  for eigenfunctions on $M$. For $p=\frac{2m}{m-1}$, the relatively dense condition  is not necessary for $\mathcal{A}$ to be  $L^p$-Logvinenko-Sereda  for eigenfunctions on $M$ with a generic metric.
\end{prop}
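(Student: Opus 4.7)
The plan is to build an explicit counterexample of the form $A_\la=M\setminus B(x_0,c/\la)$ for a fixed $x_0\in M$ and a small constant $c>0$ to be chosen. Such a sequence manifestly fails to be relatively dense: taking $z=x_0$ and any $r<c$ makes $A_\la\cap B(x_0,r/\la)=\emptyset$, so the ratio in \eqref{RDdef} is zero. All the work therefore goes into verifying that $\mathcal{A}$ is $L^p$-Logvinenko-Sereda for eigenfunctions, which amounts to producing $C_0>0$ with
\[\int_{B(x_0,c/\la)}|e_\la|^p\,dV \ \le\ (1-C_0)\int_M|e_\la|^p\,dV\]
uniformly over $\la\ge1$ and eigenfunctions $-\Delta e_\la=\la^2 e_\la$.

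The key estimate combines the volume bound $\int_{B(x_0,c/\la)}|e_\la|^p \le |B(x_0,c/\la)|\,\|e_\la\|_\infty^p$ with the sharp Sogge sup inequality $\|e_\la\|_\infty \ls \la^{(m-1)/2}\|e_\la\|_2$, producing
\[\int_{B(x_0,c/\la)}|e_\la|^p\,dV \ls c^m\la^{p(m-1)/2-m}\|e_\la\|_2^p.\]
It remains to compare $\|e_\la\|_2$ with $\|e_\la\|_p$. For $p\ge 2$, H\"older immediately yields $\|e_\la\|_2\ls\|e_\la\|_p$, so the ratio is bounded by $c^m\la^{p(m-1)/2-m}$, whose exponent is strictly negative exactly when $p<2m/(m-1)$. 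For $p<2$ I would interpolate $\|e_\la\|_2\le\|e_\la\|_p^{p/2}\|e_\la\|_\infty^{1-p/2}$ and use Sogge once more to obtain $\|e_\la\|_p^p\gs\la^{-(2-p)(m-1)/2}\|e_\la\|_2^p$; substituting collapses the ratio to $\ls c^m\la^{-1}$. Either way, for $1\le p<2m/(m-1)$ the ratio tends to zero as $\la\to\infty$, so the desired inequality holds for every $\la$ beyond some $\Lambda_0$. The finitely many eigenvalues in $[1,\Lambda_0]$ are then disposed of by a compactness argument on each finite-dimensional eigenspace together with dominated convergence: as $c\to0$, the mass on $B(x_0,c/\la)$ tends to zero uniformly on the $L^p$-unit sphere of the eigenspace, so one last shrinking of $c$ concludes this range.

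The main obstacle is the endpoint $p=2m/(m-1)$, where the $\la$-exponent in the key estimate vanishes and the bound reduces to $\ls c^m\|e_\la\|_p^p$ with a Sogge-type constant that is not absorbed by merely shrinking $c$; the round sphere shows this universal constant is, up to a logarithm, saturated by zonal harmonics. To deal with this endpoint, the plan is to appeal, for a $C^\infty$-generic metric on $M$, to the Sogge-Zelditch-type strict improvement $\|e_\la\|_\infty=o(\la^{(m-1)/2})\|e_\la\|_2$. Inserting this gain into the key estimate turns the endpoint ratio into $o(c^m)$ as $\la\to\infty$ and closes the argument, with the low-frequency eigenvalues again handled by compactness. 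The delicate step is ensuring the $o(1)$ gain is uniform enough to control the whole high-frequency spectrum, which is where one must cite the generic-metric improvement rather than the universal Sogge bound.
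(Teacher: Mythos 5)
Your analytic estimates (the sup-norm bound, the H\"older/interpolation lower bounds on $\|e_\la\|_p$, and the resulting ratio $\ls c^m\la^{p(m-1)/2-m}$ for $p\ge 2$, $\ls c^m\la^{-1}$ for $p<2$) are correct and are exactly the ingredients the paper uses. But the counterexample itself fails at the very first step: the sequence $A_\la=M\setminus B(x_0,c/\la)$ with a \emph{fixed} constant $c$ \emph{is} relatively dense. The relatively dense condition \eqref{RDdef} asks only that \emph{there exist} some $r>0$ and $\rho>0$ working for all $z$ and $\la$; your observation that the ratio vanishes at $z=x_0$ for $r<c$ does not negate it, because one may simply take $r=2c$, say, for which $\mathrm{vol}(A_\la\cap B(z,2c/\la))\ge \mathrm{vol}(B(z,2c/\la))-\mathrm{vol}(B(x_0,c/\la))\gs \mathrm{vol}(B(z,2c/\la))$ uniformly in $z$ and $\la$. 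So your $\mathcal{A}$ satisfies \eqref{RDdef} and cannot serve as a counterexample. To violate relative density you must defeat \emph{every} fixed $r$, which forces the excised ball to have radius $R_\la$ with $\la R_\la\to\infty$: then for any fixed $r$ and all large $\la$ one has $B(x_0,r/\la)\subset B(x_0,R_\la)$, hence $A_\la\cap B(x_0,r/\la)=\emptyset$.

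The good news is that your own estimates already leave exactly the room needed for this repair, and this is what the paper does. Since for $1\le p<\frac{2m}{m-1}$ the mass ratio on a ball of radius $R$ is $\ls \la^{p(m-1)/2}R^m/\|e_\la\|_p^p$, one can take $R_\la=(\log\la)^{-1}\la^{-\frac{(m-1)p}{2m}}$ for $p\ge2$ (and $R_\la=(\log\la)^{-1}\la^{-\frac{m-1}{m}}$ for $p<2$); these satisfy $R_\la\gg\la^{-1}$ precisely in the stated range of $p$, while the mass on $B(x_0,R_\la)$ is still $o(1)\cdot\|e_\la\|_p^p$. At the endpoint $p=\frac{2m}{m-1}$ the generic-metric improvement $\|e_\la\|_\infty=o(\la^{(m-1)/2})\|e_\la\|_2$ (which, as you note, is the right tool) permits $R_\la=c(\la)/\la$ with $c(\la)\nearrow\infty$ slowly, which again kills relative density. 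Your closing worry about uniformity of the $o(1)$ gain is unfounded: the Sogge--Zelditch statement is precisely a uniform $o(\la^{(m-1)/2})$ bound over the whole spectrum, so $c(\la)$ can be extracted directly from it. The low-frequency range is immaterial, since one may set $A_\la=M$ for $\la$ below any fixed threshold without affecting either property.
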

Here we use the fact that $\|e_\la\|_{L^\infty(M)}=o( \la^{\frac{m-1}2})\|e_\la\|_{L^2(M)}$ for a generic metric on any compact manifold. For the definition of generic metric and more related results, see Sogge-Zelditch \cite[Theorem 1.4]{sz01}, Sogge-Toth-Zelditch \cite{stz11}, Sogge-Zelditch \cite{sz16,sz162}.  

On the sphere $S^m=\{x\in\mathbb{R}^{m+1}:|x|=1\}$, we call $\mathcal{A}$ is \textbf{symmetric relatively dense} if there exist $r>0$ and $\rho>0$ such that for all $\la\ge1$ and $z\in S^m$,
\begin{equation}\label{RDdefsym}
	\frac{\text{vol}(A_\la\cap B(z,r/\la))+\text{vol}(A_\la\cap B(-z,r/\la))}{\text{ vol}(B(z,r/\la))}\ge\rho.
\end{equation}
It is weaker than the relatively dense condition.  However,  this condition is still not necessary for small $p$.
\begin{prop}\label{smallplssphere}
	Let $(S^m,g)$ be the sphere equipped with a Riemannian metric. For any $1\le p<\frac{2m}{m-1}$, the symmetric relatively dense condition  is not necessary for $\mathcal{A}$ to be  $L^p$-Logvinenko-Sereda  for eigenfunctions on $S^m$. For $p=\frac{2m}{m-1}$, the symmetric relatively dense condition  is not necessary for $\mathcal{A}$ to be  $L^p$-Logvinenko-Sereda  for eigenfunctions on $S^m$ with a generic metric.
\end{prop}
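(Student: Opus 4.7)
The construction mimics that behind Proposition~\ref{smallpls}, with one extra removed ball to neutralize the antipodal freedom present in \eqref{RDdefsym}. Fix any sequence of base points $\{z_\la\} \subset S^m$ and a radius $r_\la \to 0$ with $\la r_\la \to \infty$, to be specified, and set
\[
A_\la \,:=\, S^m \setminus \bigl(B(z_\la, r_\la) \cup B(-z_\la, r_\la)\bigr).
\]
Failure of symmetric relative density is immediate: given any $r, \rho > 0$, once $\la$ is large enough that $r_\la > r/\la$, choosing $z = z_\la$ forces $B(z, r/\la) \cup B(-z, r/\la) \subset B(z_\la, r_\la) \cup B(-z_\la, r_\la)$, so the numerator of \eqref{RDdefsym} vanishes.

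To verify that $\mathcal{A}$ is $L^p$-Logvinenko-Sereda for eigenfunctions, I would control the mass lost inside the two removed balls. Normalizing $\|e_\la\|_{L^2} = 1$, the crude bound
\[
\int_{S^m \setminus A_\la} |e_\la|^p \, dV \,\lesssim\, r_\la^m \,\|e_\la\|_{L^\infty}^p
\]
must be compared with a lower bound on $\int_{S^m} |e_\la|^p dV$: for $p \ge 2$, H\"older on a bounded manifold yields $\|e_\la\|_{L^p}^p \gtrsim 1$; for $1 \le p < 2$, the $L^\infty$--$L^2$--$L^p$ interpolation $1 = \int |e_\la|^2 \le \|e_\la\|_{L^\infty}^{2-p} \int|e_\la|^p$ gives $\int |e_\la|^p dV \gtrsim \|e_\la\|_{L^\infty}^{-(2-p)} \gtrsim \la^{-(m-1)(2-p)/2}$. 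Plugging in the H\"ormander bound $\|e_\la\|_{L^\infty} \lesssim \la^{(m-1)/2}$, the loss-to-total ratio is at most $r_\la^m \la^{p(m-1)/2}$ for $p \ge 2$ and $r_\la^m \la^{m-1}$ for $p < 2$. With $r_\la = \la^{-\alpha}$, each is $\la^{-\eta}$ for some $\eta > 0$ provided $\alpha > \max\{(m-1)/m,\, p(m-1)/(2m)\}$. Such an $\alpha \in (0,1)$ exists precisely when $p < 2m/(m-1)$, matching the range in the statement.

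The main obstacle is the endpoint $p = 2m/(m-1)$, where the threshold for $\alpha$ degenerates to $1$ and the H\"ormander bound is saturated. Here I would invoke the generic-metric refinement $\|e_\la\|_{L^\infty} \le \epsilon(\la) \la^{(m-1)/2} \|e_\la\|_{L^2}$ with $\epsilon(\la) \to 0$ (cited immediately after Proposition~\ref{smallpls}), and take $r_\la = g(\la)/\la$ where $g(\la) \to \infty$ grows slowly enough that $g(\la)^m \epsilon(\la)^p \to 0$, for instance $g(\la) := \epsilon(\la)^{-p/(2m)}$. Then $\la r_\la = g(\la) \to \infty$ still kills symmetric relative density, while the loss-to-total ratio is $\lesssim g(\la)^m \epsilon(\la)^p = o(1)$, which is all one needs. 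The delicate point is that only the existence of such a growth rate $g$ can be asserted, since $\epsilon(\la)$ is known only qualitatively for a generic metric.
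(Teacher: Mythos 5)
Your construction and verification coincide with the paper's: remove two antipodal balls $B(\pm z, R_\la)$ with $R_\la\gg\la^{-1}$ chosen (via the H\"ormander sup-norm bound and the H\"older/interpolation lower bound on $\|e_\la\|_{L^p}^p$) so that the removed mass is a vanishing fraction of the total, and use the $o(\la^{(m-1)/2})$ generic-metric bound at the endpoint. The argument is correct and essentially identical to the paper's proof, which likewise reduces to Proposition~\ref{smallpls} with $A_\la=S^m\setminus\bigl(B(z,R_\la)\cup B(-z,R_\la)\bigr)$.
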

Nevertheless, we can characterize $L^p$-Logvinenko-Sereda sets for eigenfunctions on the standard sphere for large $p$ by the symmetric relatively dense condition.
\begin{theorem}\label{largepls}
	For any $\frac{2m}{m-1}<p<\infty$, the symmetric relatively dense condition  is  necessary and sufficient for $\mathcal{A}$ to be  $L^p$-Logvinenko-Sereda sets for eigenfunctions on the standard sphere $S^m$.
\end{theorem}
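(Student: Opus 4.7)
My plan is to symmetrize and reduce to Theorem \ref{LSthm}. Set $\tilde A_\lambda := A_\lambda \cup (-A_\lambda)$. Since the antipodal map is an isometry, $\text{vol}(\tilde A_\lambda\cap B(z,r/\lambda))\ge\tfrac12\bigl(\text{vol}(A_\lambda\cap B(z,r/\lambda))+\text{vol}(A_\lambda\cap B(-z,r/\lambda))\bigr)$, so the symmetric relatively dense condition \eqref{RDdefsym} for $\mathcal{A}$ forces the ordinary relatively dense condition \eqref{RDdef} for $\tilde{\mathcal{A}}$ (with $\rho/2$). Theorem \ref{LSthm} will then supply $C>0$ with $\int_{\tilde A_\lambda}|f|^p\,dV \ge C\int_{S^m}|f|^p\,dV$ for every $f\in E_\lambda$. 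Specializing to an eigenfunction $e_\lambda$ of eigenvalue $\lambda^2=k(k+m-1)$, which is a spherical harmonic of degree $k$ and therefore the restriction of a homogeneous harmonic polynomial, I will use $e_\lambda(-x)=(-1)^ke_\lambda(x)$ to conclude that $|e_\lambda|^p$ is antipodally symmetric and $\int_{\tilde A_\lambda}|e_\lambda|^p\,dV \le 2\int_{A_\lambda}|e_\lambda|^p\,dV$. Combining gives \eqref{LSdef2} with constant $C/2$. This half is valid for all $1<p<\infty$ and does not use the threshold hypothesis.

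\textbf{Necessity.} I will argue by contradiction: assume $\mathcal{A}$ satisfies \eqref{LSdef2} with constant $C_0>0$ but not \eqref{RDdefsym}. From the negation of \eqref{RDdefsym}, I would fix $r>0$ (large, to be chosen) and extract a sequence $k_n\to\infty$ together with points $z_n\in S^m$ such that, with $\lambda_n:=\sqrt{k_n(k_n+m-1)}$,
\begin{equation*}
\epsilon_n := \frac{\text{vol}(A_{\lambda_n}\cap B(z_n,r/\lambda_n))+\text{vol}(A_{\lambda_n}\cap B(-z_n,r/\lambda_n))}{\text{vol}(B(z_n,r/\lambda_n))} \longrightarrow 0
\end{equation*}
(the degenerate case of bounded $k_n$ can be treated by an analogous fixed-degree argument). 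As test eigenfunctions I take the zonal spherical harmonics $Z_n := Z_{k_n}^{z_n}$. The Hilb--Mehler asymptotic for Gegenbauer polynomials delivers $\|Z_n\|_\infty\sim k_n^{m-1}$ attained at $\pm z_n$, $\|Z_n\|_p^p\sim k_n^{(m-1)p-m}$ (valid precisely for $p>\frac{2m}{m-1}$), and the decisive tail estimate
\begin{equation*}
\int_{S^m\setminus(B(z_n,r/\lambda_n)\cup B(-z_n,r/\lambda_n))}|Z_n|^p\,dV \;\lesssim\; r^{\,m-(m-1)p/2}\,k_n^{(m-1)p-m},
\end{equation*}
whose exponent $m-(m-1)p/2$ is strictly negative because $p>\frac{2m}{m-1}$. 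I would then split $\int_{A_{\lambda_n}}|Z_n|^p\,dV$ into the two antipodal polar pieces, each bounded by $\|Z_n\|_\infty^p\cdot\epsilon_n\cdot\text{vol}(B(z_n,r/\lambda_n))$, and the complementary tail bounded by the display, yielding $\int_{A_{\lambda_n}}|Z_n|^p\,dV \lesssim(\epsilon_nr^m+r^{m-(m-1)p/2})k_n^{(m-1)p-m}$. Together with the lower bound from \eqref{LSdef2} applied to $Z_n$ and $\|Z_n\|_p^p\sim k_n^{(m-1)p-m}$, this forces $\epsilon_nr^m+r^{m-(m-1)p/2}\gtrsim C_0$ for all $n$. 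First fixing $r$ so large that $r^{m-(m-1)p/2}<C_0/2$ (possible since the exponent is negative) then leaves $\epsilon_n\gtrsim r^{-m}$, contradicting $\epsilon_n\to 0$.

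\textbf{Main obstacle.} The whole contradiction hinges on the $L^p$-concentration estimate in the display above, whose exponent $m-(m-1)p/2$ changes sign exactly at the Sogge critical exponent $p=\frac{2m}{m-1}$. Above this threshold zonal harmonics (rather than Knapp quasimodes) saturate the spherical $L^p$ eigenfunction bounds and concentrate their $L^p$-mass in $O(1/\lambda)$-balls around the two poles; below it, the middle Hilb regime dominates, $Z_n$ spreads its mass over all of $S^m$, and this test function can no longer produce the contradiction --- consistent with Proposition \ref{smallplssphere} in the opposite direction. The genuinely technical input is therefore the careful derivation of the tail bound from the standard asymptotic expansions for Gegenbauer polynomials; everything else is packaging and the application of Theorem \ref{LSthm}.
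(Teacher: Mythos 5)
Your proposal is correct and follows essentially the same route as the paper: sufficiency by symmetrizing to $\tilde A_\la=A_\la\cup(-A_\la)$, applying Theorem \ref{LSthm}, and using the parity $e_\la(-x)=(-1)^ke_\la(x)$; necessity by testing against zonal harmonics and splitting into the two polar caps plus a tail whose exponent $m-\frac{m-1}{2}p$ is negative precisely when $p>\frac{2m}{m-1}$. The only differences are cosmetic (a contradiction framing and a different normalization of $Z_{\la,\xi}$ versus the paper's direct derivation of the volume lower bound, which also handles all $\la\ge1$ uniformly without a separate bounded-degree case).
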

By Proposition \ref{smallplssphere}, the range of $p$ in Theorem \ref{largepls} is essentially sharp. For the range $1\le p<\frac{2m}{m-1}$, we propose a conjecture that $L^p$-Logvinenko-Sereda sets for eigenfunctions on the standard sphere can be characterized by the tubular geometric control condition. See Conjecture \ref{tubeconj} and  some evidence in Section \ref{gausssect}.

 Similarly, it is interesting to characterize $L^p$-Carleson measures for eigenfunctions. We say that $\mu=\{\mu_\la\}_\la$ is  \textbf{$L^p$-Carleson for eigenfunctions} if there exists $C>0$ such that for all  $\la\ge1$ and eigenfunctions $e_\la$ satisfying $-\Delta e_\la=\la^2e_\la$ we have
\begin{equation}\label{cardef2}
	\int_M |e_\la|^pd\mu_\la\le C\int_M|e_\la|^pdV.
\end{equation} 
By Theorem \ref{carthm}, the relatively sparse condition is sufficient for $\mu$ to be $L^p$-Carleson for eigenfunctions, but we observe that it is not necessary for small $p$.

\begin{prop}\label{smallp}
	For any $1\le p<\frac{2m}{m-1}$, the relatively sparse condition  is not necessary for $\mu$ to be $L^p$-Carleson for eigenfunctions on $M$. For $p=\frac{2m}{m-1}$, the relatively sparse condition  is not necessary for $\mu$ to  be $L^p$-Carleson for eigenfunctions on $M$ with a generic metric.
\end{prop}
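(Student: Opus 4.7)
The plan is to construct, for each $p$ in the stated range, measures of the form $\mu_\la = c_\la\,\delta_{z_0}$ concentrated at a fixed point $z_0\in M$, for a scalar sequence $c_\la>0$ to be chosen. Since $\mu_\la(B(z_0,r/\la))=c_\la$ while $\text{vol}(B(z_0,r/\la))\approx(r/\la)^m$, relative sparseness will fail the moment $c_\la\la^m\to\infty$. The task is therefore to pick $c_\la$ as large as possible subject to the Carleson condition $c_\la|e_\la(z_0)|^p\le C\|e_\la\|_p^p$ for every eigenfunction, and then to verify that the maximal admissible $c_\la$ beats $\la^{-m}$ throughout the subcritical range.

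To bound $|e_\la(z_0)|$ uniformly I will use the reproducing kernel $K_\la$ of the spectral projector $\chi_\la$ onto the window $[\la,\la+1]$. Since $e_\la=\chi_\la e_\la$, one has $e_\la(z_0)=\langle e_\la,K_\la(z_0,\cdot)\rangle$, whence $|e_\la(z_0)|\le\|K_\la(z_0,\cdot)\|_{p'}\|e_\la\|_p$ by H\"older. Two standard estimates on $K_\la(z_0,\cdot)$ enter: the self-reproducing identity gives $\|K_\la(z_0,\cdot)\|_2^2=K_\la(z_0,z_0)\ls\la^{m-1}$ (a local Weyl bound), and applying Sogge's $L^2\to L^\infty$ inequality to $K_\la(z_0,\cdot)$ itself yields $\|K_\la(z_0,\cdot)\|_\infty\ls\la^{m-1}$. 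Interpolating in $L^{p'}$ produces $|e_\la(z_0)|\ls\la^{(m-1)/p}\|e_\la\|_p$ for $1\le p\le 2$, while for $p\ge 2$ the simpler chain $|e_\la(z_0)|\le\|e_\la\|_\infty\ls\la^{(m-1)/2}\|e_\la\|_2\ls\la^{(m-1)/2}\|e_\la\|_p$ suffices. Combining the two ranges yields
\[
c_\la|e_\la(z_0)|^p\ls c_\la\,\la^{\tau(p)}\,\|e_\la\|_p^p,\qquad\tau(p):=(m-1)\max\bigl(1,\tfrac{p}{2}\bigr).
\]

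The crux is the observation that $\tau(p)<m$ precisely when $p<\frac{2m}{m-1}$. For such $p$, the choice $c_\la:=\la^{-\tau(p)}$ both makes the Carleson constant bounded and forces $c_\la\la^m=\la^{m-\tau(p)}\to\infty$, exhibiting $\mu$ as $L^p$-Carleson for eigenfunctions yet not relatively sparse. At the critical exponent $p=\frac{2m}{m-1}$ one has $\tau(p)=m$ and the construction barely degenerates; on $M$ with a generic metric, however, the Sogge-Zelditch improvement $\|e_\la\|_\infty\le\epsilon_\la\la^{(m-1)/2}\|e_\la\|_2$ with $\epsilon_\la\to 0$ tightens the above by a factor $\epsilon_\la^p$, and the choice $c_\la:=\epsilon_\la^{-p/2}\la^{-m}$ then yields $c_\la\la^m=\epsilon_\la^{-p/2}\to\infty$ (non-sparse) together with a Carleson constant $\ls\epsilon_\la^{p/2}\to 0$.

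The only ingredient beyond classical Sogge bounds is the sup-norm bound on the kernel $K_\la(z_0,\cdot)$, which follows by applying Sogge's $L^\infty$ estimate to this kernel (viewed as a function in the range of $\chi_\la$); the main subtlety I anticipate is organizing the exponent $\tau(p)$ correctly across the two regimes $p\le 2$ and $p\ge 2$, and invoking the generic-metric improvement cleanly at the endpoint.
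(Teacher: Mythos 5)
Your construction is correct, and it follows the same underlying strategy as the paper — concentrate the measure at a single point at a weight that violates sparseness but is controlled by sup-norm eigenfunction bounds, with the Sogge--Zelditch generic improvement $\|e_\la\|_\infty=o(\la^{(m-1)/2})\|e_\la\|_2$ rescuing the endpoint $p=\tfrac{2m}{m-1}$ — but the technical execution differs in two respects. First, the paper uses the absolutely continuous measure $d\mu_\la=c(\la)\1_{B(\xi,1/\la)}dV$ with $c(\la)=\log\la$ rather than a Dirac mass; both are legitimate (the paper itself uses $\delta_z$ elsewhere), and your choice of the maximal admissible weight $c_\la=\la^{-\tau(p)}$ is sharper than the paper's conservative $\log\la$. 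Second, for $1\le p<2$ the paper simply pairs the upper bound $\int_{B(\xi,1/\la)}|e_\la|^p\ls\la^{(m-1)p/2-m}\|e_\la\|_2^p$ with the H\"older lower bound $\|e_\la\|_p^p\gs\la^{-\frac{m-1}{2}(2-p)}\|e_\la\|_2^p$, whereas you run a duality argument through the unit-band spectral projector kernel, using $K_\la(z_0,z_0)\ls\la^{m-1}$ and $\|K_\la(z_0,\cdot)\|_\infty\ls\la^{m-1}$ and interpolating to get $|e_\la(z_0)|\ls\la^{(m-1)/p}\|e_\la\|_p$. Both give $\tau(p)=m-1$ on that range; your route needs the pointwise Weyl law for a unit spectral window (fine on closed manifolds, which is the setting of this proposition), while the paper's needs only $\|e_\la\|_\infty\ls\la^{(m-1)/2}\|e_\la\|_2$ plus H\"older on a finite measure space, so it is marginally more elementary. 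The bookkeeping of $\tau(p)=(m-1)\max(1,p/2)$ and the equivalence $\tau(p)<m\iff p<\tfrac{2m}{m-1}$ are correct, as is the endpoint argument with $c_\la=\epsilon_\la^{-p/2}\la^{-m}$.
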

On the other hand, we are able to characterize $L^p$-Carleson measures for eigenfunctions on the standard sphere for large $p$ by the relatively sparse condition.
\begin{theorem}\label{largep}
	 For  any $\frac{2m}{m-1}<p<\infty$, the relatively sparse condition  is  necessary and sufficient for $\mu$ to be $L^p$-Carleson for eigenfunctions on the standard sphere $S^m$.
\end{theorem}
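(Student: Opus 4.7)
The plan is to prove both directions of the characterization. Sufficiency is an immediate consequence of Theorem \ref{carthm}: every $\lambda^2$-eigenfunction of $-\Delta$ belongs to $E_\lambda$, so the relatively sparse condition yields \eqref{cardef} for all $f \in E_\lambda$, and in particular \eqref{cardef2} for eigenfunctions.

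For necessity, the natural strategy on the standard sphere is to test the Carleson inequality \eqref{cardef2} against the extremal eigenfunctions: for a given $z \in S^m$ and eigenvalue $\lambda^2 = k(k+m-1)$, let $Z_\lambda^{(z)}$ be the $L^2$-normalized zonal spherical harmonic of degree $k$ centered at $z$. The three ingredients I need are the pointwise lower bound $|Z_\lambda^{(z)}(x)| \gtrsim \lambda^{(m-1)/2}$ on some ball $B(z,r_0/\lambda)$, the oscillatory-decay upper bound $|Z_\lambda^{(z)}(x)| \lesssim \lambda^{(m-1)/2}(1+\lambda\,d(x,\{\pm z\}))^{-(m-1)/2}$, and the global $L^p$ estimate $\|Z_\lambda^{(z)}\|_{L^p(S^m)}^p \asymp \lambda^{p(m-1)/2 - m}$ valid precisely for $p > \frac{2m}{m-1}$. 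The threshold appears here because, after the substitution $s = \lambda r$, the integral $\int_0^\pi (1+\lambda r)^{-p(m-1)/2}\,r^{m-1}\,dr$ admits a convergent tail as $\lambda \to \infty$ exactly when $p(m-1)/2 > m$, so that the $L^p$ mass of $Z_\lambda^{(z)}$ is concentrated at scale $1/\lambda$ around $\pm z$. Inserting $Z_\lambda^{(z)}$ into \eqref{cardef2} and restricting the left-hand side to $B(z,r_0/\lambda)$ yields
$$\lambda^{p(m-1)/2}\,\mu_\lambda(B(z,r_0/\lambda)) \lesssim \int_{S^m}|Z_\lambda^{(z)}|^p\,d\mu_\lambda \leq C\,\|Z_\lambda^{(z)}\|_{L^p(S^m)}^p \asymp \lambda^{p(m-1)/2 - m},$$
so that $\mu_\lambda(B(z,r_0/\lambda)) \lesssim \lambda^{-m} \asymp \mathrm{vol}(B(z,r_0/\lambda))$, which is exactly the relatively sparse condition \eqref{musparse}.

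The main obstacle, beyond orchestrating these bounds, is to justify the three asymptotic properties of the zonal harmonic on $S^m$. These follow from the classical Darboux asymptotics for Gegenbauer polynomials (or equivalently the local Weyl law for the projector onto the $\lambda^2$-eigenspace), together with a careful split of $S^m$ into the $1/\lambda$-neighborhoods of $\pm z$ and their complement. It is precisely at this split that the dimensional threshold surfaces: for $p \leq \frac{2m}{m-1}$ the contribution to $\|Z_\lambda^{(z)}\|_{L^p}^p$ from the complement of these small balls dominates or matches that from the balls themselves, so the test-function argument degenerates---entirely consistent with Proposition \ref{smallp} and showing that the range of $p$ in the theorem is sharp.
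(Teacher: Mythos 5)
Your proposal is correct and follows essentially the same route as the paper: sufficiency is quoted from Theorem \ref{carthm}, and necessity is obtained by testing \eqref{cardef2} against the zonal spherical harmonic centered at an arbitrary point, using $\|Z_{\la,\xi}\|_{L^p}^p\approx\la^{\frac{m-1}{2}p-m}$ for $p>\frac{2m}{m-1}$ together with the lower bound $|Z_{\la,\xi}|\gtrsim\la^{\frac{m-1}{2}}$ on a ball of radius $\approx 1/\la$ to conclude $\mu_\la(B(\xi,r/\la))\lesssim\la^{-m}$. The only difference is cosmetic (your stated pointwise decay $\la^{\frac{m-1}{2}}(1+\la d)^{-\frac{m-1}{2}}$ is in fact the correctly normalized version of the bound the paper records).
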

By Proposition \ref{smallp}, the range of $p$ in Theorem \ref{largep} is essentially sharp. We essentially use the property that at every point of the sphere, there exists a sequence of eigenfunctions (e.g. zonal spherical harmonics)  with maximal eigenfunction growth (measured by  $L^\infty$ norm) at that point.  Sogge-Zelditch \cite[Theorem 1.1]{sz01} gave a necessary condition for this property using geodesic loops. It would be interesting to look for a sufficient condition.

\noindent\textbf{Outline for the proof of Theorem \ref{LSthm} and Theorem \ref{carthm}.} 

To prove the sufficiency part of  Theorem \ref{LSthm}, we first handle the boundaryless case and then deal with the Dirichlet case and the Neumann case separately. The boundaryless case can be proved by the harmonic extension method in Ortega-Cerd\`a and Pridhnani \cite{OCP13}. We need the $L^p$-mean value inequality for harmonic functions and the $L^p$-boundedness for the spectral multipliers. For the Dirichlet case, we find a good control  for the contribution for the part close to the boundary so we  eventually reduce the proof to  the boundaryless case. We need Bernstein inequalities that essentially rely on the Gaussian heat kernel upper bounds.  For the Neumann case, we need to establish the gradient estimate \eqref{gradest} for harmonic functions under Neumann boundary condition in order to proceed in the way as the boundaryless case. Our argument is inspired by the book of Li \cite{li12}, Chen \cite{chen90} and Wang \cite{wang97}.

To prove the necessity part,  heuristically one may  take $f=\1_{B(z,r/\la)}$ in \eqref{LSdef} to get \eqref{RDdef}, but this function is not in $E_\la$. Nevertheless, this heuristic can be made more precise. We shall use the complex time heat kernel estimates to construct appropriate functions in $E_\la$ that are essentially supported in the ball of radius $1/\la$. We also exploit Seeley's spectral function estimates and Bernstein inequalities.

 The proof of Theorem \ref{carthm} is based on the similar strategy. The sufficiency part is simpler and we only use the $L^p$-mean value inequality for the harmonic extensions and the $L^p$-boundedness of spectral multipliers. To prove the necessity part, we shall exploit the two-sided estimate for the heat kernel on the diagonal, which holds when the boundary is empty or the  boundary condition is Neumann. It is due to the conservation property of the heat kernel. We also need the complex time heat kernel estimates  to construct functions in $E_\la$ that  approximate the indicator function $f=\1_{B(\xi,1/\la)}$. Furthermore, we construct a counterexample when the boundary condition is Dirichlet.

\noindent\textbf{Remarks.}
 1. It is natural to consider the endpoint case $p=1$ in Theorem \ref{LSthm} and Theorem \ref{carthm}. Recall that Marzo and Ortega-Cerd\`a \cite{MO08} characterized $L^p$-Logvinenko-Sereda sets on the sphere for $1\le p<\infty$, by using classical estimates on the Jacobi polynomials. However, the endpoint case $p=1$ seems more subtle on general Riemannian manifolds, and our approach essentially relies on the $L^p$-mean value inequality \eqref{mv} for harmonic functions and  the $L^p$-boundedness of spectral multipliers in Lemma \ref{Lplemma}.

2. It is an interesting open problem to characterize $L^p$-Logvinenko-Sereda sets and $L^p$-Carleson measures for eigenfunctions on $M$ for $1\le p\le \frac{2m}{m-1}$, even if $M$ is the standard sphere. We can choose Gaussian beams as test functions to give some necessary conditions. See Section \ref{gausssect} for  a conjecture and some evidence on the standard sphere.

\noindent\textbf{Organization of the paper.} In Section 2, we introduce the estimates of the heat kernels and the spectral functions. In Section 3, we prove the sufficiency part of Theorem \ref{LSthm}. In Section 4, we prove the necessity  part of Theorem \ref{LSthm}. In Section 5, we prove Theorem \ref{carthm}. In Section 6, we prove Proposition \ref{smallpls}, Proposition \ref{smallplssphere} and Theorem \ref{largepls}.  In Section 7, we prove Proposition \ref{smallp} and Theorem \ref{largep}. In Section 8, we further discuss $L^p$-Logvinenko-Sereda sets and $L^p$-Carleson measures for eigenfunctions on the standard sphere for small $p$.

\noindent\textbf{Notations.} Throughout this paper, $X\ls Y$ means $X\le CY$  for some positive constants $C$ independent of $\la$.   If $X\ls Y$ and $Y\ls X$, we denote $X\approx Y$. We write $X\ll Y$ if $CX\le Y$  for some large positive constants $C$ independent of $\la$. Let $d(\cdot, \cdot)$ be the Riemannian distance function on $M$.  Let $dV$ be the volume element of $M$. Let $\text{vol}(E)$ be the volume of the set $E\subset M$. Let $B(z,r)$ be the geodesic ball in $M$ centered at $z$ of radius $r$, and $\mathbb{B}(z,r)$ be the Euclidean ball in $\mathbb{R}^m$ centered at $z$ of radius $r$.

\noindent\textbf{Acknowledgments.}
The authors would like to thank Nicolas Burq,  Long Jin and Shanlin Huang for  helpful discussions and comments during the research. X.X. would like to thank YMSC at Tsinghua University for the kind hospitality in June 2024.  X.W. is partially supported by the Fundamental Research Funds for the Central Universities Grant No. 531118010864 from Hunan University. C.Z. is partially supported by National Key R\&D Program of China No. 2024YFA1015300 and NSFC Grant No. 12371097. 
%\section*{Declarations}
%\noindent \textbf{Data availability statement.} Data sharing not applicable to this article as no datasets were generated or analyzed during the current study.

%\noindent \textbf{Conflict of interests.} The authors have no relevant financial or non-financial interests to disclose.

\section{Estimates of the heat kernels and the spectral functions} 
In this section, we review some useful estimates for the heat kernels and the spectral functions. We will use them throughout the paper. As before, let $M$ be a smooth compact manifold of dimension $m\ge2$. The boundary $\partial M$ is smooth and can be empty.  If $\partial M\neq \emptyset$, we assume the boundary condition is either  Dirichlet or Neumann.

First, we need  short time Gaussian upper bounds for the heat kernel and its gradient. 
\begin{lemma}\label{heatlemma}Let $e^{t\Delta}(x,y)=p(t,x,y)$ be the heat kernel. Then there exist constants $C>0$ and $c>0$ such that for all $t\in(0,1]$ and $x,y\in M$ we have	\begin{equation}\label{heat}
		|p(t,x,y)|\le Ct^{-\frac m2}e^{-cd(x,y)^2/t},
	\end{equation}
	\begin{equation}\label{heatgrad}
		|\nabla_xp(t,x,y)|\le Ct^{-\frac m2-\frac12}e^{-cd(x,y)^2/t}.
	\end{equation}
\end{lemma}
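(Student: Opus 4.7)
The plan is to establish both estimates via a local parametrix construction, which is natural because for $t\in(0,1]$ the heat kernel behavior is essentially local on a compact manifold. I would cover $M$ by finitely many coordinate charts: interior charts in geodesic normal coordinates, and boundary charts in Fermi (boundary normal) coordinates that straighten $\partial M$. In each chart I build the standard Minakshisundaram--Pleijel parametrix
\[
H_N(t,x,y) = (4\pi t)^{-m/2}\, e^{-d(x,y)^2/(4t)} \sum_{k=0}^{N} t^k u_k(x,y),
\]
with coefficients $u_k$ determined by the transport equations along geodesics. For a boundary chart I include an image term obtained by reflecting one argument across $\partial M$, choosing sign $+1$ for Neumann and $-1$ for Dirichlet so that the boundary condition is satisfied to leading order. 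A smooth partition of unity then assembles these into a global approximate kernel.

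Next, I would observe that each term of $H_N$ is bounded by $C_k t^{k-m/2} e^{-cd(x,y)^2/t}$, and that differentiation in $x$ brings down at worst a factor of the form $d(x,y)/t$, which is absorbed by the Gaussian via $d(x,y)\, e^{-cd^2/t} \ls \sqrt{t}\, e^{-c'd^2/t}$ for any $c'<c$. The remainder $R_N := (\partial_t - \Delta_x)H_N$ is of order $t^{N-m/2} e^{-cd^2/t}$, so Duhamel's formula
\[
p(t,x,y) = H_N(t,x,y) - \int_0^t \!\! \int_M p(t-s,x,z)\, R_N(s,z,y)\, dV(z)\, ds,
\]
together with a standard iteration in $N$, absorbs the error and yields \eqref{heat}. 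Differentiating the same Duhamel identity once in $x$, using the already-established bound for $p$ inside the integral together with the parametrix-level gradient bound for $H_N$, then yields \eqref{heatgrad}.

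The main obstacle is the Neumann case, where one must rule out a spurious blow-up of $\nabla_x p$ near $\partial M$: the naive Fermi-coordinate parametrix has a normal-derivative term at the boundary that must be cancelled by the image contribution, and this cancellation is not automatic because the metric is not exactly the flat half-space one. I would either verify the cancellation by a careful expansion in Fermi coordinates (using that the boundary is geodesic to leading order in the normal variable), or, alternatively, invoke the boundary-adapted Li--Yau type gradient estimate of Wang \cite{wang97} and Chen \cite{chen90}, which gives a pointwise bound of the form $|\nabla_x \log p(t,x,y)| \ls d(x,y)/t + 1/\sqrt{t}$ uniformly up to $\partial M$; combined with \eqref{heat} this immediately produces \eqref{heatgrad}. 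The Dirichlet case is easier: domain monotonicity bounds $p$ above by the heat kernel of a slightly larger boundaryless manifold into which $M$ embeds, and the gradient bound descends by the same Duhamel argument with the image-subtracted parametrix.
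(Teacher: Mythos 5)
Your proposal is essentially the same argument the paper relies on: the paper proves this lemma only by citing parametrix constructions on $(0,1]\times M\times M$ (Greiner \cite{gre71}, Li--Yau \cite{liyau}), and your Minakshisundaram--Pleijel parametrix with reflected image terms in Fermi coordinates plus Duhamel iteration is precisely that construction, with the genuine difficulty (the Neumann gradient bound up to the boundary) correctly identified and plausibly resolved either by Greiner-style expansion or by the Chen--Wang gradient estimates. No gap worth flagging.
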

The pointwise estimates \eqref{heat} and \eqref{heatgrad} follow from parametrix constructions for $p(t,x,y)$ on $(0,1]\times M\times M$. See e.g. Greiner \cite{gre71}, Li-Yau \cite{liyau}. These estimates are useful to obtain the boundedness of spectral multipliers of generalized Laplacians on compact manifolds with boundary, see e.g. Taylor \cite{taylornote}, Mukherjee  \cite{mu18}. A direct consequence of \eqref{heat} is the upper bound for the spectral function.
\begin{corr}\label{corupp}
	 There exists $C>0$ such that for all $\la\ge1$ and $x,y\in M$
	\begin{equation}
		\Big|\sum_{\la_j\le \la}e_j(x)\overline{e_j(y)}\Big|\le C\la^m.
	\end{equation}
\end{corr}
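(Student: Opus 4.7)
The plan is to use the spectral expansion of the heat kernel together with the pointwise upper bound \eqref{heat} evaluated at the critical time scale $t=\la^{-2}$, and then apply Cauchy--Schwarz.

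First, I would recall that the heat semigroup on $L^2(M)$ has the spectral resolution
\begin{equation*}
p(t,x,y)=\sum_{j}e^{-t\la_j^{2}}\,e_j(x)\,\overline{e_j(y)},
\end{equation*}
which converges in a suitable sense for $t>0$ (and pointwise on the diagonal by Mercer-type arguments once smoothness is known). Setting $x=y$ gives the nonnegative series
\begin{equation*}
p(t,x,x)=\sum_{j}e^{-t\la_j^{2}}\,|e_j(x)|^{2}.
\end{equation*}

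Next, I would choose $t=\la^{-2}\in(0,1]$ and apply \eqref{heat} to obtain
\begin{equation*}
\sum_{j}e^{-\la_j^{2}/\la^{2}}|e_j(x)|^{2}=p(\la^{-2},x,x)\le C\la^{m}.
\end{equation*}
Since $e^{-\la_j^{2}/\la^{2}}\ge e^{-1}$ for every index $j$ with $\la_j\le\la$, throwing away the contribution of the remaining (nonnegative) terms yields the pointwise $L^{2}$ bound on the truncated spectral sum,
\begin{equation*}
\sum_{\la_j\le\la}|e_j(x)|^{2}\le eC\la^{m}\quad\text{for all }x\in M.
\end{equation*}

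Finally, the off-diagonal bound follows from Cauchy--Schwarz:
\begin{equation*}
\Big|\sum_{\la_j\le\la}e_j(x)\overline{e_j(y)}\Big|\le\Big(\sum_{\la_j\le\la}|e_j(x)|^{2}\Big)^{1/2}\Big(\sum_{\la_j\le\la}|e_j(y)|^{2}\Big)^{1/2}\le C'\la^{m},
\end{equation*}
which is the claimed estimate. There is no real obstacle here; the only subtle point is justifying the pointwise spectral expansion of $p(t,\cdot,\cdot)$ on the diagonal, which is standard once \eqref{heat} and Weyl-type a priori bounds on $|e_j|$ are in hand (or, alternatively, one can restrict to finite partial sums, bound them using the heat kernel, and pass to the limit using monotone convergence since all summands are nonnegative).
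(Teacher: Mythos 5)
Your argument is correct and is exactly the "direct consequence of \eqref{heat}" that the paper has in mind: bound the diagonal spectral sum by $p(\la^{-2},x,x)\le C\la^m$ using positivity of the terms and $e^{-\la_j^2/\la^2}\ge e^{-1}$ for $\la_j\le\la$, then pass to the off-diagonal case by Cauchy--Schwarz. Nothing further is needed.
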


When the boundary is empty or the boundary condition is Neumann, we have the conservation property \begin{equation}\label{conser}
	\int_M p(t,x,y)dV(y)= 1
\end{equation} for all $t>0$ and $x\in M$. Together with the semigroup property and the short-time Gaussian upper bound, the conservation property implies a short time lower bound for the heat kernel on the diagonal. See e.g.  Hebisch and  Saloff-Coste \cite[p. 1455]{HSC01}.

\begin{corr}\label{corlow}Suppose that the boundary is empty or the boundary condition is Neumann.  Then there exist $t_0>0$ and $C>0$ such that for all $t\in (0,t_0]$ and $x\in M$ we have
	\begin{equation}\label{twoheat}
		C^{-1}t^{-m/2}\le p(t,x,x)\le Ct^{-m/2}.
	\end{equation}
	As a consequence, there exists $C>0$ such that for all $\la\ge1$ and $x\in M$
\begin{equation}\label{twosideest}
	C^{-1}\la^m\le \sum_{\la_j\le \la}|e_j(x)|^2\le C\la^m.
\end{equation}
\end{corr}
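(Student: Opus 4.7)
The upper bound on $p(t,x,x)$ is immediate by specializing \eqref{heat} to $y=x$, so the real content is the lower bound, and then a transference argument from the heat kernel to the spectral projector. My plan is as follows.

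First, for the lower bound on $p(t,x,x)$, I exploit the conservation property \eqref{conser} in the way that is standard for non-collapsing geometries (essentially the Hebisch--Saloff-Coste argument). By self-adjointness and the semigroup property,
\begin{equation*}
p(2t,x,x)=\int_M p(t,x,y)^2\,dV(y).
\end{equation*}
The Gaussian upper bound \eqref{heat} together with the fact that geodesic balls of radius $r\le 1$ in $M$ satisfy $\mathrm{vol}(B(x,r))\lesssim r^m$ shows that $\int_{M\setminus B(x,K\sqrt t)} p(t,x,y)\,dV(y)\lesssim e^{-cK^2}$ uniformly for $t$ small. Choosing $K$ large and using \eqref{conser}, this forces $\int_{B(x,K\sqrt t)} p(t,x,y)\,dV(y)\ge 1/2$. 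A Cauchy--Schwarz on this ball then gives
\begin{equation*}
\tfrac{1}{4}\le \mathrm{vol}(B(x,K\sqrt t))\int_M p(t,x,y)^2\,dV(y)\lesssim t^{m/2}\, p(2t,x,x),
\end{equation*}
which yields $p(2t,x,x)\gtrsim t^{-m/2}$ for all sufficiently small $t$, as claimed.

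Next, for the spectral function bounds at $t=1/\lambda^2$, I use the eigenfunction expansion
\begin{equation*}
p(1/\lambda^2,x,x)=\sum_j e^{-\lambda_j^2/\lambda^2}|e_j(x)|^2.
\end{equation*}
The upper bound $\sum_{\lambda_j\le\lambda}|e_j(x)|^2\lesssim \lambda^m$ is just Corollary \ref{corupp}. For the lower bound, I separate at a large cutoff $K\lambda$. On the tail $\lambda_j>K\lambda$, I use Abel summation together with the monotone counting function bound $N(\mu,x):=\sum_{\lambda_j\le\mu}|e_j(x)|^2\le C\mu^m$ (Corollary \ref{corupp}) to estimate
\begin{equation*}
\sum_{\lambda_j>K\lambda}e^{-\lambda_j^2/\lambda^2}|e_j(x)|^2 \lesssim \int_{K\lambda}^\infty \tfrac{\mu}{\lambda^2}e^{-\mu^2/\lambda^2}\mu^m\,d\mu \lesssim \lambda^m e^{-c K^2}.
\end{equation*}
Combining with the low-frequency part $\sum_{\lambda_j\le K\lambda}e^{-\lambda_j^2/\lambda^2}|e_j(x)|^2\le N(K\lambda,x)$ and the lower bound $p(1/\lambda^2,x,x)\gtrsim\lambda^m$ from the first step, I choose $K$ large enough to absorb the tail and conclude $N(K\lambda,x)\gtrsim \lambda^m$. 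A final rescaling $\lambda\mapsto \lambda/K$ then gives $N(\lambda,x)\gtrsim\lambda^m$.

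The main technical obstacle is step one, where one has to marry the conservation property (which fails in the Dirichlet case, explaining the hypothesis) with the Gaussian upper bound \eqref{heat} to localize the heat mass inside a $\sqrt t$-ball. Once that on-diagonal two-sided estimate is in hand, the passage to the two-sided spectral-function estimate is routine Abel summation using Corollary \ref{corupp}; care is only needed in choosing the constants $K$, $t_0$, and absorbing the exponentially small tail.
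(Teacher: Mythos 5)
Your proposal is correct and follows essentially the same route as the paper: the on-diagonal lower bound via the semigroup property, conservation of heat, Gaussian localization in a $\sqrt t$-ball and Cauchy--Schwarz, followed by the standard transference to the spectral function. The only (cosmetic) difference is in the transference step, where you fix the time $t=1/\lambda^2$, cut at frequency $K\lambda$ and control the tail by Abel summation before rescaling, whereas the paper fixes the cutoff at $\lambda$, shrinks the time to $1/(c_0\lambda)^2$ and bounds the tail by a dyadic decomposition; both are equivalent.
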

\begin{proof}We just need to prove the lower bounds, as the upper bounds follow directly from Lemma \ref{heatlemma} and Corollary \ref{corupp}. By the semigroup property  and Cauchy-Schwarz, we get for any fixed $a>0$ 
	\begin{align*}
		p(t,x,x)&=\int_M p(t/2,x,y)^2dV(y)\ge \int_{B(x,a\sqrt t)}p(t/2,x,y)^2dV(y)\\
		&\ge\frac1{\text{vol}(B(x,a\sqrt t))}\Big(\int_{B(x,a\sqrt t)}p(t/2,x,y)dV(y)\Big)^{1/2}.
	\end{align*}
	By the conservation property \eqref{conser} and the Gaussian heat kernel bound \eqref{heat} we get
	\begin{align*}
	\int_{B(x,a\sqrt t)}p(t/2,x,y)dV(y)&=\int_{M}p(t/2,x,y)dV(y)-\sum_{j=0}^\infty\int_{B(x,2^{j+1}a\sqrt{t})\setminus B(x,2^{j}a\sqrt{t})}p(t/2,x,y)dV(y)\\
	&\ge 1-\sum_{j=0}^\infty C(t/2)^{-m/2}e^{-c2^{2j}a^2}\text{vol}(B(x,2^{j+1}a\sqrt{t}))\\
	&\ge 1-C_m\sum_{j=0}^\infty (2^j a)^m e^{-c2^{2j}a^2}\ge \frac12,
	\end{align*}
whenever $a>0$ is large enough. 

We fix such $a$ and then for sufficiently small $t$ we have the heat kernel lower bound
\[	p(t,x,x)\ge \frac1{4\text{vol}(B(x,a\sqrt t))}\gs t^{-m/2}.\]
Next, we use the heat kernel lower bound to obtain the lower bound for the spectral function
\begin{align*}
	\sum_{\la_j\le\la}|e_j(x)|^2&\ge \sum_{\la_j\le \la}e^{-\la_j^2/(c_0\la)^2}|e_j(x)|^2\\
	&\ge e^{\Delta/(c_0\la)^2}(x,x)-\sum_{\la_j>\la}e^{-\la_j^2/(c_0\la)^2}|e_j(x)|^2\\
	&\ge C^{-1}(c_0\la)^m-C\sum_{\ell\ge0}e^{-2^{2\ell}/c_0^2}(2^{\ell}\la)^m\\
	&\ge C^{-1}(c_0\la)^m-C_1\sum_{\ell\ge0}(2^\ell/c_0)^{-m-1}(2^{\ell}\la)^m\\
	&=C^{-1}(c_0\la)^m-2C_1c_0^{m+1}\la^m\\
	&\gs \la^m,
\end{align*}
whenever  $c_0>0$ is small enough.
\end{proof}
The lower bounds in Corollary \ref{corlow} are not valid when boundary condition is Dirichlet. See e.g.  Zhang \cite{zhangq02} for a different two-sided estimate of the Dirichlet heat kernel. Nevertheless,  we still have \eqref{twosideest} for the Dirichlet spectral function whenever the point $x$ is not too close to the boundary, see Corollary \ref{awaylow}.

  Seeley \cite{seeley1, seeley2} established the  spectral function estimate on compact manifolds with boundary for either Dirichlet or Neumann boundary condition.

\begin{lemma}\label{slem}
	There exist $C>0$ and $\alpha>0$ such that for all $\la\ge1$ and $x\in M$ we have
	\begin{equation}
		\Big|\sum_{\la_j\le \la}|e_j(x)|^2-\omega_m(2\pi)^{-m}\la^m\Big|\le C(\delta^{\alpha-1}\la^{m-1}+\delta^{-3/2}\la^{m-\frac32})
	\end{equation}
	where $\omega_m$ is the volume of the unit ball in $\mathbb{R}^m$ and  $\delta=d(x,\partial M)$ is the geodesic distance from $x$ to the boundary $\partial M$.
\end{lemma}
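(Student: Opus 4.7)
The plan is to follow the Hörmander--Levitan approach based on the half-wave operator, combined with a Fourier Tauberian argument, modified to account for the boundary. Writing $N(\la,x):=\sum_{\la_j\le\la}|e_j(x)|^2$, the measure $dN_x$ is positive and we already have the rough bound $N(\la,x)\lesssim\la^m$ from Corollary \ref{corupp}, so a standard Fourier Tauberian theorem reduces the claim to the analogous estimate for the smoothed quantity
$$
(\rho\ast dN_x)(\la)=\sum_j\rho(\la-\la_j)|e_j(x)|^2,
$$
where $\rho\in\mathcal{S}(\R)$ is even, nonnegative, with $\rho(0)=1$ and $\hat\rho\in\Coi(\R)$ supported in $(-T_0,T_0)$ for a fixed small $T_0$. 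By the spectral theorem, taking even parts, this smoothed sum equals
$$
\frac{1}{2\pi}\int_\R \hat\rho(t)\,e^{i\la t}\,\cos(t\sqrt{-\Delta})(x,x)\,dt,
$$
so the task reduces to analyzing the diagonal of the wave kernel $\cos(t\sqrt{-\Delta})(x,y)$ under the prescribed boundary condition for $|t|\le T_0$.

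The wave kernel is built from a parametrix in two pieces. In the bulk regime $|t|<\delta/2$ the wave has not yet reached $\partial M$ and Hörmander's interior FIO parametrix applies; non-degenerate stationary phase in the associated oscillatory integral produces the Weyl main term $\omega_m(2\pi)^{-m}\la^m$ together with an $O(\la^{m-1})$ remainder. In the complementary range $\delta/2\le|t|\le T_0$, realize the wave as the sum of a free piece and a reflected piece obtained via the method of images in boundary normal coordinates, globalized with a partition of unity. The reflected piece generates two types of errors. First, \emph{transversal} reflected rays, treated by non-degenerate stationary phase with critical points at distance $\sim\delta$ from the base, yield a contribution bounded by $C\delta^{\alpha-1}\la^{m-1}$, where the exponent $\alpha$ records the loss coming from differentiating the boundary phase through normal coordinates. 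Second, \emph{glancing} rays nearly tangent to $\partial M$ force the reflected parametrix into Airy form (stationary phase degenerates to first order); estimating the Airy-type oscillatory integral uniformly in $\delta$ gives a contribution bounded by $C\delta^{-3/2}\la^{m-3/2}$. Summing these three contributions and running the Tauberian step from $0$ to $\la$ produces the bound stated in the lemma.

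The main obstacle is the glancing analysis: controlling the Airy-type parametrix and the resulting oscillatory integrals sharply in $\delta$ is the technical heart of Seeley's argument and requires a careful expansion in boundary normal coordinates together with Airy symbolic calculus, as well as a careful matching between the transversal and glancing regimes across the caustic where stationary phase degenerates. Once this is handled, both the transversal reflections and the interior Weyl contribution are comparatively routine applications of standard FIO techniques.
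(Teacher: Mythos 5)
The paper does not prove this lemma at all: it is imported verbatim from Seeley's two papers \cite{seeley1,seeley2}, so there is no in-paper argument to compare against. Your sketch is a reasonable reconstruction of the strategy of the cited source -- a Fourier Tauberian reduction to the smoothed spectral density, an interior Hörmander parametrix for times before the wave returns from the boundary, a reflected parametrix for later times, and a split of the reflected contribution into transversal and glancing regimes producing the $\delta^{\alpha-1}\la^{m-1}$ and $\delta^{-3/2}\la^{m-3/2}$ terms respectively. Structurally this matches Seeley.

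As a proof, however, it has two genuine gaps. First, the Tauberian input is misstated: the crude bound $N(\la,x)\lesssim\la^m$ from Corollary \ref{corupp} is not what the Fourier Tauberian theorem requires. One needs the unit-band (smoothed density) bound $\sum_j\rho(\la-\la_j)|e_j(x)|^2\lesssim\la^{m-1}$ to conclude $|N(\la,x)-(\rho\ast N_x)(\la)|\lesssim\la^{m-1}$; feeding in only the $O(\la^m)$ bound yields an $O(\la^m)$ Tauberian error, which is useless. This is fixable -- the $O(\la^{m-1})$ density bound comes out of the same parametrix analysis -- but the argument must be organized so that the smoothed estimate is established first, and near the boundary that estimate is itself nontrivial. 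Second, and more seriously, everything that actually produces the stated exponents is asserted rather than derived: the uniform-in-$\delta$ Airy analysis in the glancing region, the matching across the caustic, and the computation showing the transversal reflections cost exactly $\delta^{\alpha-1}\la^{m-1}$ constitute essentially the entire content of Seeley's papers, and you defer all of it. (A minor point: by finite propagation speed the relevant threshold below which the boundary is invisible on the diagonal is $|t|<2\delta$, not $|t|<\delta/2$; your choice is merely conservative.) If citation of Seeley is permitted, as the paper itself does, the lemma needs no proof; if a self-contained argument is intended, the proposal is an outline with its technical core missing.
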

As a consequence, we get the following two-sided estimate for the spectral function.
\begin{corr}\label{awaylow}
	There exists $C>0$ and $C_1>0$ such that for all $\la\ge1$ and $x\in M$ with $d(x,\partial M)\ge C/\la$,
	\begin{equation}\label{twos}
		C_1^{-1}\la^m\le \sum_{\la_j\le \la}|e_j(x)|^2\le C_1\la^m.
	\end{equation}
\end{corr}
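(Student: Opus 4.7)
The plan is to read off both bounds directly from the estimates already recorded in Section 2, with the only real work being to calibrate the constant $C$ in the hypothesis $d(x,\partial M)\ge C/\lambda$ so that the boundary remainder in Seeley's expansion cannot absorb the leading Weyl term. The upper bound is immediate from Corollary \ref{corupp}, which gives $\sum_{\lambda_j\le\lambda}|e_j(x)|^2\le C\lambda^m$ pointwise on $M$ with no restriction on $x$; the constant $C_1$ in the conclusion is at least as large as that one.

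For the lower bound I set $\delta=d(x,\partial M)$ and invoke Lemma \ref{slem}, which gives
\[\sum_{\lambda_j\le\lambda}|e_j(x)|^2\ge\omega_m(2\pi)^{-m}\lambda^m-C_0\bigl(\delta^{\alpha-1}\lambda^{m-1}+\delta^{-3/2}\lambda^{m-3/2}\bigr).\]
Inserting the hypothesis $\delta\ge C/\lambda$ bounds the remainder by
\[C_0C^{\alpha-1}\lambda^{m-\min(1,\alpha)}+C_0C^{-3/2}\lambda^{m}.\]
The first term has strictly lower order in $\lambda$ than $\lambda^m$ and is thus negligible once $\lambda\ge\lambda_0$ for some fixed $\lambda_0$, while the second has exactly the same order as the Weyl main term. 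The only way to win is therefore to choose $C$ large enough that $C_0C^{-3/2}\le\tfrac14\omega_m(2\pi)^{-m}$; with this choice, for $\lambda\ge\lambda_0$ we obtain $\sum_{\lambda_j\le\lambda}|e_j(x)|^2\ge\tfrac12\omega_m(2\pi)^{-m}\lambda^m$, which is the claimed lower bound with $C_1=2(\omega_m(2\pi)^{-m})^{-1}$.

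For the leftover range $1\le\lambda\le\lambda_0$ I enlarge $C$ once more. Since $\lambda$ is bounded there, the hypothesis $d(x,\partial M)\ge C/\lambda\ge C/\lambda_0$ forces $x$ to lie uniformly far from $\partial M$; by taking $C$ so large that $C/\lambda_0$ exceeds $\max_{x\in M}d(x,\partial M)$, the condition becomes vacuous on this range and there is nothing to verify. The main obstacle throughout is the $\delta^{-3/2}\lambda^{m-3/2}$ remainder in Seeley's estimate, which matches the Weyl main order $\lambda^m$ exactly when $\delta$ sits at the critical scale $1/\lambda$; this is what forces the explicit calibration of $C$ and is the reason the safety distance from $\partial M$ cannot be relaxed below $1/\lambda$ without a finer boundary-layer analysis.
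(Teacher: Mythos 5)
Your proof is correct and is exactly the argument the paper intends: the corollary is presented as an immediate consequence of Lemma \ref{slem} (with the upper bound also available from Corollary \ref{corupp}), and your calibration of $C$ against the $\delta^{-3/2}\la^{m-3/2}$ remainder, plus the vacuous treatment of the bounded range $1\le\la\le\la_0$, fills in the details faithfully. The only cosmetic point is that your bound $C^{\alpha-1}\la^{m-\min(1,\alpha)}$ for the first remainder term implicitly assumes $\alpha\le1$; for $\alpha>1$ one simply uses that $\delta$ is bounded by the diameter of $M$, with the same conclusion.
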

The two-sided estimate \eqref{twos} is well-known on compact manifolds without boundary and it follows from the pointwise Weyl law, see Avakumovi\'c \cite{ava}, Levitan \cite{lev1,lev2}, H\"ormander \cite{hor}, Sogge \cite{fio,hangzhou}.

Recall that the symbol class $S^0$ consists of smooth functions $\varphi:\mathbb{R}\to \mathbb{R}$ satisfying \begin{equation}\label{pdo}
	|\varphi^{(k)}(t)|\le C_k(1+|t|)^{-k},\ \ k=0,1,2,....
\end{equation}
\begin{lemma}\label{Lplemma}Let $1<p<\infty$ and $\varphi\in S^0$. Then $\varphi(\sqrt{-\Delta}):L^p(M)\to L^p(M)$ is bounded.
\end{lemma}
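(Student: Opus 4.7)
The plan is to realize $\varphi(\sqrt{-\Delta})$ as a Calder\'on--Zygmund operator on the compact doubling space $(M,d,dV)$ and then apply the standard Calder\'on--Zygmund theorem. Boundedness on $L^2(M)$ is immediate from the spectral theorem since $\varphi\in L^\infty$, so the whole task reduces to establishing the weak-type $(1,1)$ bound; interpolation then yields $L^p$ for $1<p\le 2$, and duality (applied to $\overline\varphi(\sqrt{-\Delta})$, which is an operator of the same form because $\overline\varphi\in S^0$) gives $2<p<\infty$.

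To connect with the heat kernel estimates of Lemma~\ref{heatlemma}, I would decompose the symbol dyadically. Fix an even $\chi\in\Coi(\R)$ with $\chi\equiv 1$ on $[-1,1]$ and $\supp\chi\subset[-2,2]$, and set $\psi_0=\chi\varphi$ and $\psi_j(s)=\varphi(s)\bigl(\chi(2^{-j}s)-\chi(2^{1-j}s)\bigr)$ for $j\ge 1$. Then $\varphi=\sum_{j\ge 0}\psi_j$, with $\supp\psi_j\subset\{|s|\sim 2^j\}$ for $j\ge 1$, and the assumption $\varphi\in S^0$ gives scale-invariant derivative bounds $\|(s\partial_s)^k\psi_j\|_\infty\ls_k 1$ uniformly in $j$. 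The piece $\psi_0(\sqrt{-\Delta})$ has a smooth kernel by the functional calculus and is trivially bounded on every $L^p$, so only the sum $\sum_{j\ge 1}\psi_j(\sqrt{-\Delta})$ needs to be controlled.

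Writing each $\psi_j$ through a subordination representation in terms of the heat semigroup $e^{t\Delta}$ (for instance by expressing the even and odd parts of $\psi_j$ as Laplace transforms of Schwartz weights applied to $\tau\mapsto e^{-\tau s^2}$, or by combining the Poisson subordination formula with finite propagation speed of the wave group), one obtains a kernel representation
\[
K_j(x,y)=\int_0^\infty g_j(\tau)\,p(\tau\,2^{-2j},x,y)\,d\tau,
\]
with $g_j$ enjoying uniform rapid decay at $0$ and $\infty$. Feeding the Gaussian bounds \eqref{heat} and \eqref{heatgrad} into this formula yields
\[
|K_j(x,y)|\ls 2^{jm}(1+2^j d(x,y))^{-N},\qquad |\nabla_x K_j(x,y)|\ls 2^{j(m+1)}(1+2^j d(x,y))^{-N}
\]
for every $N$, uniformly in $j$. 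Summing in $j\ge 1$ and combining with the smooth piece produces the standard Calder\'on--Zygmund bounds $|K(x,y)|\ls d(x,y)^{-m}$ and $|\nabla_x K(x,y)|\ls d(x,y)^{-m-1}$ on $\{x\ne y\}$ for the full kernel of $\varphi(\sqrt{-\Delta})$. The Calder\'on--Zygmund decomposition on the doubling space $(M,d,dV)$, with geodesic balls, then produces the weak-$(1,1)$ estimate and closes the argument.

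The principal obstacle is the subordination/kernel-estimate step when $\partial M\neq\emptyset$: there the heat kernel is built from a boundary parametrix adapted to the chosen boundary condition, and one must verify that the Gaussian bounds \eqref{heat} and \eqref{heatgrad}, after integration against $g_j$, retain enough H\"older regularity in the base point for the Calder\'on--Zygmund machinery to apply uniformly up to the boundary. This is exactly the content of the spectral multiplier theorems of Taylor \cite{taylornote} and Mukherjee \cite{mu18}, which I would invoke directly rather than redo the parametrix analysis.
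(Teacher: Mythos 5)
The paper does not actually prove this lemma: it is quoted from the literature (Seeger--Sogge \cite{ss89} when $\partial M=\emptyset$, and Xu \cite{xu09,xu11}, Taylor \cite{taylornote}, Mukherjee \cite{mu18} when $\partial M\neq\emptyset$), and your proposal ultimately does the same, since you defer the essential difficulty --- uniform kernel estimates up to the boundary --- to \cite{taylornote,mu18}. Your sketch of the underlying Calder\'on--Zygmund argument (dyadic decomposition of the symbol, subordination to the heat semigroup exactly as in the Fourier-inversion formula preceding Corollary \ref{smoothker}, Gaussian bounds, CZ theory on the doubling space $M$) is the standard mechanism behind those references and is sound in outline. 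Two points to watch if you wanted it self-contained: (i) the weak-$(1,1)$ bound requires the H\"ormander integral condition in the \emph{second} variable, $\int_{d(x,y)\ge 2d(y,y')}|K(x,y)-K(x,y')|\,dV(x)\ls 1$, whereas you estimate $\nabla_xK_j$ in the first; this is repaired by passing to the adjoint $\overline\varphi(\sqrt{-\Delta})$, whose kernel is $\overline{K(y,x)}$ and which is again an $S^0$ multiplier. (ii) Your kernel representation runs through the heat semigroup at complex times, and Lemma \ref{cpxheat} controls only $p(z,x,y)$, not $\nabla_xp(z,x,y)$; you would either need to derive the complex-time gradient Gaussian bound from \eqref{heat}--\eqref{heatgrad} (a Phragm\'en--Lindel\"of-type argument), or avoid pointwise gradient bounds altogether and verify the integrated H\"ormander condition directly from the Gaussian bounds, as in Duong--McIntosh \cite{dm99}. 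Neither point is a genuine gap given that you invoke \cite{taylornote,mu18} for the boundary case, which is where the real work lies.
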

On compact manifolds without boundary, the boundedness of spectral multipliers in Lemma \ref{Lplemma} are well-known, see e.g. Seeger-Sogge \cite{ss89}.  For such results on compact manifolds with boundary, see Xu \cite{xu09,xu11}, Taylor \cite{taylornote}, Mukherjee  \cite{mu18}.  For results in the non-compact setting, see Cheeger-Gromov-Taylor \cite{cgt82},  Mauceri-Meda-Vallarino \cite{mmv09}, Taylor \cite{taylor09,taylor89,taylor009}. For such results in the general setting of a metric measure space, see e.g. Clerc-Stein \cite{cs74}, Duong-McIntosh \cite{dm99}, Duong-Ouhabaz-Sikora \cite{dos02}, Duong-Yan \cite{dy05,dy005}.

Furthermore,  by the semigroup property, we can obtain long time heat kernel estimates from Lemma \ref{heatlemma}.

\begin{corr}\label{longheat}For $b\ge0$, let $L=-\Delta+b$ and  $p(t,x,y)=e^{-tL}(x,y)$. Then  for sufficiently large $b$, we have \eqref{heat} and \eqref{heatgrad}  for all $t>0$ and $x,y\in M$.
\end{corr}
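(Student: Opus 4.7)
The plan is to exploit the identity
\begin{equation*}
e^{-tL}(x,y) = e^{-tb}\, e^{t\Delta}(x,y),
\end{equation*}
which holds because multiplication by the constant $b$ commutes with $\Delta$. This reduces the task to uniform control of $e^{t\Delta}(x,y)$ and $\nabla_x e^{t\Delta}(x,y)$ for all $t>0$, after which the factor $e^{-tb}$ will supply enough exponential decay at large $t$ to recover the Gaussian form.

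For $t\in(0,1]$, Lemma \ref{heatlemma} already provides the required Gaussian upper bounds for $e^{t\Delta}(x,y)$ and $\nabla_x e^{t\Delta}(x,y)$, and since $e^{-tb}\le 1$, these bounds transfer to $e^{-tL}$ and $\nabla_x e^{-tL}$ without loss. For $t\ge 1$, I would use the semigroup decomposition
\begin{equation*}
e^{t\Delta}(x,y)=\int_M e^{\frac12\Delta}(x,z)\, e^{(t-\frac12)\Delta}(z,y)\,dV(z),
\end{equation*}
combined with the $L^\infty$-contractivity $\|e^{s\Delta}\|_{L^\infty\to L^\infty}\le 1$ of the heat semigroup (valid in all three boundary regimes by the (sub-)Markov property $\int_M p(s,x,y)\,dV(y)\le 1$). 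This yields
\begin{equation*}
|e^{t\Delta}(x,y)|\le \|e^{\frac12\Delta}(x,\cdot)\|_{L^1(M)}\le C,
\end{equation*}
uniformly in $t\ge 1$ and $x,y\in M$, where the $L^1$ bound follows by integrating the short-time Gaussian estimate against the compact volume element. Placing the gradient on the short-time factor in the same decomposition produces the analogous uniform bound $|\nabla_x e^{t\Delta}(x,y)|\le C$ for all $t\ge 1$.

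Putting the two ranges together, for $t\ge 1$ one has $|e^{-tL}(x,y)|\le Ce^{-tb}$ and $|\nabla_x e^{-tL}(x,y)|\le Ce^{-tb}$. To match the target form, I would use that $d(x,y)^2/t\le D^2$ with $D=\mathrm{diam}(M)$, so $e^{-cd(x,y)^2/t}\ge e^{-cD^2}$, which reduces the task to verifying $e^{-tb}\le C'\,t^{-m/2-1/2}$ for all $t\ge 1$; this amounts to the finiteness of $\sup_{t\ge 1} t^{m/2+1/2}e^{-tb}$, which holds as soon as $b>0$, with the implicit constant shrinking as $b$ is taken sufficiently large. No step in this argument should be seriously obstructing; the only point requiring a moment of care is the uniform $L^\infty$-contractivity in the Dirichlet case, which is a standard consequence of the maximum principle applied to the heat equation.
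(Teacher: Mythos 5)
Your proposal is correct and rests on the same basic mechanism as the paper's proof: split off a short-time factor via the semigroup property, use Lemma \ref{heatlemma} on that factor, and let $e^{-tb}$ absorb whatever the long-time factor costs, using compactness of $M$ to trivialize the Gaussian weight. The difference is in how the long-time factor is controlled. The paper iterates the semigroup $[t]$ times and bounds $|e^{-tL}(x,y)|\le e^{-tb}C^{[t]+1}(\mathrm{vol}(M))^{[t]}$, a bound that grows exponentially in $t$, so the largeness of $b$ is genuinely used to beat it. You instead invoke the sub-Markov property $\int_M p(s,x,y)\,dV(y)\le 1$ to get $|e^{t\Delta}(x,y)|\le C$ uniformly for $t\ge1$, which is sharper and shows that in fact any $b>0$ suffices; the price is that you need positivity of the heat kernel and $L^\infty$-contractivity in all three boundary regimes, which is standard but is an extra input the paper avoids. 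One small slip in your write-up: the chain $|e^{t\Delta}(x,y)|\le\|e^{\frac12\Delta}(x,\cdot)\|_{L^1(M)}$ does not follow as stated, since the other factor $p(t-\tfrac12,\cdot,y)$ is not yet known to be bounded by $1$ in $L^\infty$. The correct one-line fix is to apply contractivity in the $y$-variable: writing $e^{t\Delta}(x,\cdot)=e^{(t-\frac12)\Delta}\bigl[p(\tfrac12,x,\cdot)\bigr]$ gives $|e^{t\Delta}(x,y)|\le\|p(\tfrac12,x,\cdot)\|_{L^\infty}\le C$, and the gradient bound then follows exactly as you describe by putting $\nabla_x$ on the short-time factor and using the resulting uniform bound on $p(t-\tfrac12,\cdot,y)$.
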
 
\begin{proof}
	By Lemma \ref{heatlemma}, it suffices to consider $t>2$. Let $[t]$ be the integer part of $t$.  By \eqref{heat} we have $|e^{\Delta}(x,y)|\le C$. By the semigroup property, we have
	\[e^{-tL}=e^{t(\Delta-b)}=e^{-tb}(e^{\Delta})^{[t]}e^{(t-[t])\Delta}.\]
	This implies
	\[|e^{-tL}(x,y)|\le e^{-tb}C^{[t]+1}(\text{vol}(M))^{[t]},\]
	which still satisfies \eqref{heat} for $t>2$ if $b$ is large enough.
	
	Similarly, by \eqref{heatgrad} we have $|\nabla_x e^{\Delta}(x,y)|\le C$. Note that
\[\nabla e^{-tL}=e^{-tb}(\nabla e^{\Delta})(e^{\Delta})^{[t]-1}e^{(t-[t])\Delta}\]
 implies 
\[|\nabla_xe^{-tL}(x,y)|\le e^{-tb}C^{[t]+1}(\text{vol}(M))^{[t]},\]
	which still satisfies \eqref{heatgrad} for $t>2$ if $b$ is large enough.
\end{proof}

In the following, we always fix  $L$ as in Corollary \ref{longheat} such that  \eqref{heat} and \eqref{heatgrad} hold  for all $t>0$ and $x,y\in M$.   These long time heat kernel estimates imply the following Bernstein inequalities, see e.g. Imekraz-Ouhabaz \cite[Theorem 1.1, 1.2, 2.2]{IO22}.

\begin{lemma}\label{corbern}
 There exists $C>0$ such that for all $1\le p\le \infty$, $\la\ge1$ and $f\in E_\la$
	\begin{equation}\label{bern}
		\|\nabla f\|_{L^p(M)}\le C \la\|f\|_{L^p(M)}.
	\end{equation}
Moreover, for any $\psi\in C_0^\infty(\mathbb{R})$, there exists $C>0$ such that for all $1\le p\le \infty$, $\la\ge1$ and $f\in E_\la$
	\begin{equation}\label{bern1}
		\|\nabla \psi(L/\la^2)f\|_{L^p(M)}\le C \la\|f\|_{L^p(M)}.
	\end{equation}
\end{lemma}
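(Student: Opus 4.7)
The plan is to derive both Bernstein inequalities from the long-time heat kernel Gaussian bounds \eqref{heat}--\eqref{heatgrad} in Corollary~\ref{longheat} by combining a Schur-type kernel estimate with the functional calculus of $L$.

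First I would reduce \eqref{bern} to \eqref{bern1}. Since $L = -\Delta + b$ and $f \in E_\la$, the spectral projection of $f$ is supported in $[b, b+\la^2]$, so $L/\la^2$ acts on $f$ with spectrum in $[0, 1+b]$. Picking $\psi \in C_0^\infty(\R)$ with $\psi \equiv 1$ on $[0, 1+b]$, the spectral theorem gives $\psi(L/\la^2)f = f$, making \eqref{bern} a special case of \eqref{bern1}.

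For \eqref{bern1}, the central step is the bound
\[
\|\nabla e^{-L/\la^2}\|_{L^p(M) \to L^p(M)} \le C\la, \quad 1 \le p \le \infty,
\]
which I would obtain from the gradient Gaussian bound \eqref{heatgrad} at time $t = 1/\la^2$ by applying Schur's test. The kernel is dominated by $C\la^{m+1}e^{-c\la^2 d(x,y)^2}$, and a dyadic decomposition of $M$ into geodesic annuli around $x$ combined with the volume growth $\text{vol}(B(x,r)) \ls r^m$ yields $\sup_x \int|\nabla_x p(1/\la^2, x, y)|\,dV(y) \ls \la$; the second Schur integral is controlled analogously using the symmetry of $p$. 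Next I would split $\psi$ off of the heat semigroup by writing $\psi(s) = e^{-s}\phi(s)$ with $\phi(s) := e^s\psi(s) \in C_0^\infty(\R)$. Since all these operators commute by functional calculus,
\[
\nabla\psi(L/\la^2) = \bigl(\nabla e^{-L/\la^2}\bigr)\,\phi(L/\la^2),
\]
and the task reduces to showing $\phi(L/\la^2)$ is bounded on $L^p(M)$ uniformly in $\la$. For $1 < p < \infty$ I would invoke Lemma~\ref{Lplemma} via the identification $\phi(L/\la^2) = \Phi_\la(\sqrt{-\Delta})$ with $\Phi_\la(t) = \phi((t^2+b)/\la^2)$, and verify by a direct chain-rule computation that the $S^0$-seminorms of $\Phi_\la$ are uniform in $\la$: since $\phi$ is compactly supported, $\Phi_\la$ lives on $|t| \ls \la$, and each derivative gains a factor $\la^{-1}$ that matches the $(1+|t|)^{-1}$ decay required by \eqref{pdo}.

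The main obstacle is the endpoint cases $p = 1$ and $p = \infty$, for which Lemma~\ref{Lplemma} does not apply. Following Imekraz--Ouhabaz, I would establish the off-diagonal kernel decay
\[
|\phi(L/\la^2)(x,y)| \le C_N\,\la^m (1 + \la d(x,y))^{-N}, \quad N \ge 0,
\]
which yields $\sup_x \int|\phi(L/\la^2)(x,y)|\,dV(y) \le C$ (and similarly in $y$), hence uniform $L^1$ and $L^\infty$ boundedness. Such decay can be extracted from the heat kernel Gaussian bounds in Corollary~\ref{longheat}, either by a contour deformation combining the Laplace transform of $\phi$ with $e^{-\tau L}$, or by writing $\phi$ in terms of its Fourier transform and using finite propagation speed of $\cos(\tau\sqrt{L})$. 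This endpoint spectral-multiplier analysis is the most delicate step of the argument, although it is a standard consequence of the Gaussian bounds once the heat kernel framework is in place.
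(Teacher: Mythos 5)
Your proposal is correct and follows essentially the same route as the paper, which does not write out a proof but instead derives the lemma from the long-time Gaussian bounds of Corollary~\ref{longheat} by citing Imekraz--Ouhabaz, whose argument is exactly your factorization $\nabla\psi(L/\la^2)=(\nabla e^{-L/\la^2})\,\phi(L/\la^2)$ with Schur's test on the gradient heat kernel at time $1/\la^2$. The off-diagonal decay you flag as the delicate endpoint step is already supplied by the paper as Corollary~\ref{smoothker}, obtained from the complex-time heat kernel bound of Lemma~\ref{cpxheat} via the Fourier inversion identity $\psi(hL)=\frac1{2\pi}\int\hat\psi_1(s)e^{-(1-is)hL}ds$.
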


%By applying the Bernstein inequality \eqref{bern} with $p=\infty$, we can obtain an off-diagonal estimate for the spectral function.
%\begin{corr}\label{corlow}Suppose $\sum_{\la_j\le \la}|e_j(x)|^2\ge c_0\la^m$ for some $c_0>0$ independent of $\la$ and $x$. Then there exist $c_1>0$ and $c>0$ independent of $\la$ and $x$ such that for all $y\in M$ with $d(x,y)<c_1/\la$ we have
%	\begin{equation}
%		\Big|\sum_{\la_j\le \la}e_j(x)\overline{e_j(y)}\Big|\ge c\la^m.
%	\end{equation}
%\end{corr}

Next, we exploit the Gaussian upper bound for the complex time heat kernel to calculate the kernel of multipliers. See e.g. Davies \cite[Theorem 3.4.8]{dav}, Carron-Coulhon-Ouhabaz \cite[Prop. 4.1]{cco}, Duong-Robinson \cite{dr96}, Coulhon-Duong \cite{cd00}. 
\begin{lemma}\label{cpxheat} Let $p(z,x,y)=e^{-zL}(x,y)$ for $z\in \mathbb{C}$ with ${\rm Re}\ z>0$.
	Then there exist $C>0$ and $c>0$ independent of $z$ such that for  all  $x,y\in M$ we have
	\begin{equation}
		|p(z,x,y)|\le C({\rm Re}\ z)^{-m/2}e^{-{\rm Re}(cd(x,y)^2/z)}.
	\end{equation}
\end{lemma}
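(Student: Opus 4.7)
The plan is to derive this complex-time kernel estimate from the real-time Gaussian bounds of Corollary \ref{longheat} by the Davies perturbation method, as adapted to manifolds by Carron-Coulhon-Ouhabaz \cite{cco} and Coulhon-Duong \cite{cd00}. I would conjugate $L$ by an exponential weight, estimate the twisted semigroup in $L^1\to L^\infty$, and then optimize in the weight.

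The first step is to fix a smooth $1$-Lipschitz function $\phi:M\to\R$ and a real parameter $\rho\ge 0$ and compute $L_\rho:=e^{\rho\phi}Le^{-\rho\phi}=L+2\rho\,\nabla\phi\cdot\nabla+\rho\Delta\phi-\rho^2|\nabla\phi|^2$. An integration by parts cancels the first-order term against the $\rho\Delta\phi$ correction exactly, leaving $\Re\langle L_\rho u,u\rangle\ge-\rho^2\|u\|_{L^2}^2$. Thus $L_\rho+\rho^2$ is $m$-accretive, and as a relatively form-bounded perturbation of the self-adjoint $L$ it generates a holomorphic semigroup in the right half-plane with $\|e^{-zL_\rho}\|_{L^2\to L^2}\le e^{\rho^2\Re z}$ for $\Re z>0$. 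Combining this with the real-time ultracontractivity $\|e^{-tL}\|_{L^1\to L^\infty}\ls t^{-m/2}$ that follows from Corollary \ref{longheat}, I would extend to complex time via Stein-type complex interpolation (or Phragmén-Lindelöf applied inside a sector) to obtain
\[\|e^{-zL_\rho}\|_{L^1\to L^\infty}\le C(\Re z)^{-m/2}e^{C\rho^2\Re z}.\]

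Next, I would read off the pointwise kernel bound: since the integral kernel of $e^{-zL_\rho}$ is $e^{\rho\phi(x)}p(z,x,y)e^{-\rho\phi(y)}$,
\[|p(z,x,y)|\le C(\Re z)^{-m/2}\exp\bigl(\rho(\phi(y)-\phi(x))+C\rho^2\Re z\bigr).\]
Taking $\phi$ to be a smooth $1$-Lipschitz approximation of $d(\cdot,y)$ so that $\phi(y)-\phi(x)\le -d(x,y)$, and optimizing in $\rho\ge 0$ at the critical value $\rho=d(x,y)/(2C\Re z)$, I would conclude
\[|p(z,x,y)|\le C(\Re z)^{-m/2}e^{-d(x,y)^2/(4C\Re z)}.\]
The stated bound then follows from the elementary inequality $\Re(1/z)=\Re z/|z|^2\le 1/\Re z$ for $\Re z>0$, which gives $e^{-cd(x,y)^2/\Re z}\le e^{-c\,\Re(d(x,y)^2/z)}$.

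The main obstacle is promoting the real-time ultracontractivity to complex time with the correct dependencies — namely $(\Re z)^{-m/2}$ and $e^{C\rho^2\Re z}$ rather than $|z|^{-m/2}$ or $e^{C\rho^2|z|}$ that naive Hille-Yosida estimates would produce. The key point, worked out in detail in Davies \cite{dav} and in \cite{cco}, is that $L_\rho+\rho^2$ is sectorial of half-angle strictly less than $\pi/2$, with this bound on the half-angle independent of $\rho$, so its holomorphic functional calculus controls $e^{-zL_\rho}$ in terms of $\Re z$ and not just $|z|$. Once this quantitative analyticity is in place, the remaining steps — reading off the kernel and optimizing over $\rho$ and $\phi$ — are straightforward algebra.
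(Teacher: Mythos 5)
The paper does not prove Lemma \ref{cpxheat}; it cites Davies \cite[Theorem 3.4.8]{dav} and Carron--Coulhon--Ouhabaz \cite[Prop. 4.1]{cco}, and your plan (Davies' exponential perturbation of $L$, twisted ultracontractivity, optimization in $\rho$) is indeed the method behind those references. However, your argument contains a genuine quantitative error at its central step, and the intermediate bound you claim to prove is false. The estimate $\|e^{-zL_\rho}\|_{L^2\to L^2}\le e^{\rho^2\Re z}$ does not hold for complex $z$: while the integration by parts shows $\Re\langle L_\rho u,u\rangle\ge-\rho^2\|u\|^2$, the drift term $2\rho\,\nabla\phi\cdot\nabla$ contributes an \emph{imaginary} part to the numerical range of size up to $2\rho\|\nabla u\|\|u\|$, so the numerical range of $L_\rho+\rho^2$ lies in a parabola, not in a sector of half-angle bounded away from $\pi/2$ uniformly in $\rho$. (One can place it in a sector of half-angle $\arctan\epsilon$ only at the cost of shifting by $\approx\epsilon^{-2}\rho^2$.) Carrying this through correctly gives $\|e^{-zL_\rho}\|_{L^2\to L^2}\le e^{C\rho^2|z|^2/\Re z}$, and after interpolation with ultracontractivity and optimization in $\rho$ the exponent becomes $-c\,d(x,y)^2\Re z/|z|^2=-c\,\Re(d(x,y)^2/z)$ --- which is exactly the statement of the lemma, no more.

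The stronger bound $|p(z,x,y)|\le C(\Re z)^{-m/2}e^{-cd(x,y)^2/\Re z}$ that you derive en route cannot be true: already for the free heat kernel on $\mathbb{R}^m$, $|(4\pi z)^{-m/2}e^{-|x-y|^2/(4z)}|=(4\pi|z|)^{-m/2}e^{-|x-y|^2\Re z/(4|z|^2)}$, and taking $z=1+iT$ with $|x-y|^2\approx T^2$ gives a kernel of size $\approx T^{-m/2}$, vastly larger than $e^{-cT^2}$; the same local behavior occurs on any manifold via the parametrix. So the step from real-time to complex-time ultracontractivity with factor $e^{C\rho^2\Re z}$ (rather than $e^{C\rho^2|z|^2/\Re z}$) is where the proof breaks, and the final ``elementary inequality'' $\Re(1/z)\le1/\Re z$ is deducing the true, weaker statement from a false, stronger one. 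The fix is mechanical but essential: track the $|\Im z|$-dependence of the sectoriality constants (or use the Phragm\'en--Lindel\"of argument of \cite{cco,dav} directly), and you recover precisely $e^{-\Re(cd(x,y)^2/z)}$, possibly with a harmless extra factor $(|z|/\Re z)^{m/2}$ absorbed into the constants. You should also note that on a manifold with boundary the integration by parts produces boundary terms which must be checked to have the right sign (trivial for Dirichlet, requiring the form-domain formulation for Neumann).
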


Let $\psi\in C_0^\infty(\mathbb{R})$ and  $\psi_1(x)=\psi(x)e^{x}$. Then by the Fourier inversion formula we have
$$\psi(x)=\frac1{2\pi}\int_{\mathbb{R}}\hat\psi_1(s)e^{-(1-is)x}ds.$$
Then for $h>0$,
\[\psi(hL)=\frac1{2\pi}\int_{\mathbb{R}}\hat\psi_1(s)e^{-(1-is)hL}ds.\]
Applying Lemma \ref{cpxheat} to this formula, we obtain the following pointwise estimates, since $\hat \psi_1$ is a Schwartz function.
\begin{corr}\label{smoothker}For any $N>0$, there exists $C_N>0$ such that for all $\la\ge1$ and $x,y\in M$
	\begin{equation}\label{kerupp}
		|\psi(L/\la^2)(x,y)|\le C_N\la^m(1+\la d(x,y))^{-N}.
	\end{equation}
\end{corr}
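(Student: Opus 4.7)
The plan is to use the Fourier integral representation already displayed in the paragraph preceding the corollary, namely
\[\psi(L/\la^2)(x,y)=\frac1{2\pi}\int_{\mathbb{R}}\hat\psi_1(s)\,e^{-(1-is)L/\la^2}(x,y)\,ds,\]
and then apply Lemma \ref{cpxheat} pointwise under the integral. With the complex parameter $z=(1-is)/\la^2$, one has $\mathrm{Re}\,z=1/\la^2$ and $\mathrm{Re}(1/z)=\la^2/(1+s^2)$, so Lemma \ref{cpxheat} yields the pointwise bound
\[\bigl|e^{-(1-is)L/\la^2}(x,y)\bigr|\le C\la^m\,\exp\!\Bigl(-\frac{c\la^2 d(x,y)^2}{1+s^2}\Bigr).\]
Combining this with the Schwartz decay $|\hat\psi_1(s)|\le C_{N'}(1+|s|)^{-N'}$ (for any $N'$, since $\psi_1\in C_0^\infty$), the corollary reduces to the elementary kernel inequality
\[\int_{\mathbb{R}}(1+|s|)^{-N'}\,\exp\!\Bigl(-\frac{c\la^2 d(x,y)^2}{1+s^2}\Bigr)\,ds\le C_N\,(1+\la d(x,y))^{-N},\]
with $N'$ chosen much larger than $N$.

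To verify this inequality, set $a=\la d(x,y)$. When $a\le 1$ the exponential is bounded by $1$ and the right-hand side is bounded below by $2^{-N}$, so one only needs $N'>1$ to ensure the integral is finite. When $a\ge 1$ I would split the integration at $|s|=\sqrt{a}$. On the inner region $|s|\le\sqrt a$ one has $1+s^2\le 2a$, hence $c a^2/(1+s^2)\ge ca/2$, and the contribution is controlled by $C\sqrt{a}\,e^{-ca/2}$, which is $O(a^{-N})$ for any $N$. On the outer region $|s|>\sqrt a$ the exponential is harmless and
\[\int_{|s|>\sqrt a}(1+|s|)^{-N'}\,ds\le C\,a^{-(N'-1)/2},\]
so choosing $N'\ge 2N+1$ gives the desired $O(a^{-N})$ bound. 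Together these two bounds yield $(1+\la d(x,y))^{-N}$ up to a constant depending on $N$.

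I do not expect any real obstacle: the main ingredient, the complex-time Gaussian bound in Lemma \ref{cpxheat}, does all the heavy lifting, and everything else is routine integration against a Schwartz function. The only point requiring a little care is the correct identification of $\mathrm{Re}(1/z)$ for $z=(1-is)/\la^2$, since it is the $\la^2/(1+s^2)$ factor there, rather than a naive $\la^2$, that is ultimately responsible for the algebraic $(1+\la d(x,y))^{-N}$ decay after integrating against $\hat\psi_1$.
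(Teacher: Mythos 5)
Your proposal is correct and follows exactly the route the paper takes: apply Lemma \ref{cpxheat} to the Fourier inversion formula for $\psi(hL)$ and use the Schwartz decay of $\hat\psi_1$ (the paper leaves the integral estimate implicit, while you carry it out explicitly, correctly identifying $\mathrm{Re}(1/z)=\la^2/(1+s^2)$ as the source of the polynomial decay).
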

The kernel estimate \eqref{kerupp} is known on compact manifolds without boundary, and it follows from the Hadamard parametrix for the wave equation, see e.g.  Sogge \cite[Theorem 4.3.1]{fio}. Nevertheless, this wave equation method seems hard to work on compact manifolds with boundary, since it is difficult to construct a precise parametrix for the wave equation near the boundary, see e.g. Seeley \cite{seeley1, seeley2}, Ivrii \cite{ivrii}, Melrose-Taylor \cite{MTbook}, Smith-Sogge \cite{ssjams,ssacta} and H\"ormander \cite{hor4}.

\section{Proof of Theorem \ref{LSthm}: the sufficiency part}
In this section, we  show that $\mathcal{A}$ is $L^p$-Logvinenko-Sereda if it is relatively dense. We shall modify the boundaryless argument in \cite[Proof of Prop. 5.11]{OCP13} in order to deal with  compact manifolds with boundary. Indeed, we need to handle the Dirichlet case and the Neumann case separately.  Moreover, we shall replace the $L^2$-orthogonality argument in \cite[Prop. 3.1]{OCP13} by Lemma \ref{Lplemma}, in order to handle $1<p<\infty$.

Let $\eps>0$ and $r>0$ be fixed constants to be determined later. Let $$f(z)=\sum_{\la_j\le \la}\alpha_je_j(z),$$
and its harmonic extension \begin{equation}\label{ht}
	h(z,t)=\sum_{\la_j\le \la}\alpha_je^{\la_jt}e_j(z).
\end{equation}
We choose $\psi\in C_0^\infty(\mathbb{R})$ such that 
\[\1_{[-1,1]}\le \psi\le \1_{[-2,2]}.\]Let $\varphi(s)=e^{ts}\psi(s/\la)$ with $|t|\le r/\la$. Clearly $h(z,t)=\varphi(\sqrt{-\Delta})f(z)$ and $\varphi\in S^0$ satisfies \eqref{pdo} with constants independent of $t$ and $\la$.
By Lemma \ref{Lplemma}, there exists $C>0$ such that for all $1<p<\infty$ and $|t|\le r/\la$
\begin{equation}\label{pdobd}
	\int_M|h(z,t)|^pdV(z)\le C\int_{M}|f(z)|^pdV(z).
\end{equation}

Let $N=M\times [-1,1]$. Then $f(z)=h(z,0)$ and $h$ is harmonic in $N$.  Given $f$ and its harmonic extension $h$, we define
\begin{equation}\label{defD}
	D=D_{\eps,r,f}=\Big\{z\in M: |f(z)|^p\ge \eps (\la/r)^{m+1}\int_{B(z,r/\la)}\int_{|t|\le r/\la}|h(\xi,t)|^pdV(\xi) dt \Big\}.
\end{equation}
Then by the definition of $D$ and \eqref{pdobd} we obtain
\begin{align*}
	\int_{M\setminus D} |f(z)|^pdz &\le \eps (\la/r)^{m+1}\int_{M\setminus D}\int_{B(z,r/\la)}\int_{|t|\le r/\la}|h(\xi,t)|^pdV(\xi) dtdV(z)\\
	&\ls \eps(\la/r)\int_{M}\int_{|t|\le r/\la}|h(\xi,t)|^pdV(\xi) dt\\
	&\ls \eps\int_M|f(z)|^pdV(z).
\end{align*}
We can choose $\eps$ small enough so that the contribution of $M\setminus D$ can be ignored. So it suffices to show that there exists $C>0$ such that for all $\la\ge1$ and $f\in E_\la$,
\begin{equation}\label{ineq0}
	\int_D|f|^pdV\le C \int_{A_\la}|f|^pdV.
\end{equation}
We only need to prove that there exists $C>0$ such that for all  $\la\ge1$, $f\in E_\la$ and $w\in D$,
\begin{equation}\label{ineq1}
	|f(w)|^p\le C (\la/r)^{m}\int_{A_\la\cap B(w,r/\la)}|f|^pdV.
\end{equation}

We recall the mean-value inequality  and the gradient estimate  for harmonic functions in  \cite{OCP13}. Thanks to Gilbarg-Trudinger \cite[Theorem 8.17]{GT98} for weak solutions in $W^{1,2}$, for all sufficiently small $r>0$ and $1<p<\infty$, the mean-value inequality for the harmonic function $h$ 
\begin{equation}\label{mv}
	|h(z,t)|^p\le C (\la/r)^{m+1}\int_{B(z,r/\la)}\int_{|s-t|\le r/\la}|h(w,s)|^pdV(w)ds
\end{equation}
is valid when the boundary is empty or the boundary condition is either Dirichlet or Neumann. Note that if $h$ is harmonic in the Euclidean sense, then \eqref{mv} just follows from the classical mean value equality and H\"older inequality. On the other hand, for the harmonic function $h$ on $B(w,2\rho)$, the gradient estimate 
\begin{equation}\label{gradest}
	\sup_{B(w,\rho)}|\nabla h|\le C \rho^{-1}\sup_{B(w,2\rho)}|h|
\end{equation}is known when the ball $B(w,2\rho)$  does not intersect the boundary of $M$, see e.g. Schoen-Yau \cite[Corollary 3.2]{sy94}, Li \cite[Theorem 6.1]{li12}. Nevertheless, to our best knowledge, the gradient estimate \eqref{gradest} seems unknown when the ball $B(w,2\rho)$ intersects the boundary. The  estimate \eqref{gradest} will be used to prove the equicontinuity in Step 3 of Section \ref{bdless}.

In Section \ref{bdless}, we first give the proof  for the boundaryless case  in order to describe the modifications to be made in the case with boundary later. The argument is similar to the proof of \cite[Prop. 5.11]{OCP13}.

In Section \ref{dir}, we handle the Dirichlet case. We show that the contribution of a small neighborhood of the boundary can be ignored, see Lemma \ref{dirlem}. So we only need to consider those $w$ outside this neighborhood, and then the ball $B(w,r/\la)$ does not intersect the boundary if we choose $r$ small enough. Thus, we can use the gradient estimate \eqref{gradest} and the argument of the boundaryless case to finish the proof.

In Section \ref{neu}, we handle the Neumann case. We can establish the gradient estimate of the form \eqref{gradest} when the ball $B(w,2\rho)$ intersects the boundary under Neumann boundary condition by using the strategy in Li \cite{li12}, Chen \cite{chen90},  Wang \cite{wang97}. So the proof for the boundaryless case can still work in the Neumann case.

\subsection{Boundaryless case}\label{bdless} We assume \eqref{ineq1} is not true in order to construct functions that satisfy the opposite inequality. Then we will parametrize these functions and prove that their limit is  analytic and  vanishes on a subset of positive measure. This will lead to a contradiction.

Suppose \eqref{ineq1} is not true. Then for each $n\ge1$, there exist $\la_n\ge1$, $f_n\in E_{\la_n}$ and $w_n\in D_n$ such that
\begin{equation}\label{ass}
	|f_n(w_n)|^p> n(\la_n/r)^{m}\int_{A_{\la_n}\cap B(w_n,r/\la_n)}|f_n|^pdV.
\end{equation}
Here $D_n$ is defined as in \eqref{defD} with $f$ replaced by $f_n$.

\noindent \textbf{Step 1. Scaling.} By the compactness of the manifold, we have the exponential map $$\exp_w:\mathbb{B}(0,\rho_0)\subset\mathbb{R}^m\to B(w,\rho_0)\subset M$$ for some $\rho_0=\rho_0(M)>0$, such that $\exp_w(0)=w$ and $(B(w,\rho_0),\exp_w^{-1})$ is a normal coordinate chart and the metric $g_{ij}(w)=\delta_{ij}$. Now we rescale the metric. Let $\exp_n(z)=\exp_{w_n}(rz/\la_n)$ and $$d\mu_n(z)=\sqrt{|g|(\exp_n(z))}dz.$$ Let $$F_n(z)=c_nf_n(\exp_n(z))$$ and its harmonic extension $$H_n(z,t)=c_nh_n(\exp_n(z),rt/\la_n),$$ where $c_n$ is a constant such that
\[\int_{\mathbb{B}(0,1)}\int_{|t|<1}|H_n(z,t)|^pd\mu_n(z)dt=1.\]
By scaling, it is equivalent to
\begin{equation}\label{heq}
	|c_n|^p(\la_n/r)^{m+1}\int_{B(w_n,r/\la_n)}\int_{|t|<r/\la_n}|h_n(w,t)|^pdV(w)dt=1.
\end{equation}
\noindent \textbf{Step 2. Uniform boundedness.} By the definition of $D_n$ and \eqref{heq} we have
\begin{equation}\label{low0}
	|F_n(0)|^p=|c_n|^p|f_n(w_n)|^p\ge \eps.
\end{equation}
By the mean value inequality \eqref{mv} and \eqref{heq} we get
\begin{equation}\label{upp0}
	|F_n(0)|^p=|c_n|^p|h_n(w_n,0)|^p\ls1.
\end{equation}
Similarly, if $z\in \mathbb{B}(0,\frac12)$ and $|s|<\frac12$, then for all $w=\exp_n(z)\in B(w_n,r/(2\la_n))$ and $t=sr/\la_n$ we have $B(w,r/(2\la_n))\subset B(w_n,r/\la_n)$ and then  \eqref{mv} and \eqref{heq} imply
\begin{equation}\label{Hupp}
	|H_n(z,s)|^p=|c_n|^p|h_n(w,t)|^p\ls 1.
\end{equation}
So working with $\frac12$ instead of 1 we have proved the sequence $H_n$ is uniformly bounded.

Furthermore, if $B_n\subset \mathbb{B}(0,1)$ is such that $\exp_n(B_n)=A_{\la_n}\cap B(w_n,r/\la_n)$, then the assumption \eqref{ass} and \eqref{upp0} imply
\begin{equation}\label{L2small}
	\int_{B_n}|F_n|^pd\mu_n\ls \frac1n.
\end{equation}

\noindent \textbf{Step 3. Equicontinuity.} Let $\eps_1>0$. For $w\in B(w_n,r/(4\la_n))$, $\tilde w\in B(w,\eps_1r/\la_n)$, $|t|<r/(4\la_n)$ and $|s-t|<\eps_1r/\la_n$, we have $|(w,t)-(\tilde w,\tilde t)|\le\eps_1r/\la_n$. If $\eps_1$ is small enough, then by the gradient estimate \eqref{gradest} and \eqref{Hupp} we get
\begin{align*}
	|c_n||h_n(w,t)-h_n(\tilde w,\tilde t)|&\le |c_n|(\eps_1r/\la_n)\sup_{\omega\in B(w_n,r/(2\la_n),|t|<r/(2\la_n)}|\nabla h_n(w,t)|\\
	&\le C|c_n|(\eps_1r/\la_n)(\la_n/r)\sup_{\omega\in B(w_n,r/\la_n),|t|<r/\la_n} |h_n(w,t)|\\
	&\ls \eps_1.
\end{align*}
By scaling, for all $\eps>0$, there exists $\eps_1>0$ such that for all $n$
\[|H_n(z,s)-H_n(\tilde z,\tilde s)|<\eps,\]
whenever $|z-\tilde z|+|s-\tilde s|<\eps_1$, $z\in \mathbb{B}(0,\frac14)$ and $|t|< \frac14$. So the sequence $H_n$ is equicontinuous.

\noindent \textbf{Step 4. Analyticity.}
Since the sequence $H_n$ is uniformly bounded and equicontinuous on $\mathbb{B}(0,1)\times(-1,1)$, by Ascoli-Arzela's theorem, there is a subsequence (denoted as the sequence itself) $H_n\to H$ uniformly on any compact subset of $\mathbb{B}(0,1)\times(-1,1)$. Note that $H_n$ is harmonic with respect to the rescaled metric $g_n(z)=g(\exp_n(z))$, which has a subsequence (denoted as the sequence itself) uniformly converges to the Euclidean metric. We can see that $H$ is harmonic  in the Euclidean sense by \cite[Lemma 5.9]{OCP13}. This implies $H$ is real analytic.

\noindent \textbf{Step 5. Contradiction.}
 Recall  $B_n\subset \mathbb{B}(0,1)$ satisfies $\exp_n(B_n)=A_{\la_n}\cap B(w_n,r/\la_n)$. Since the sequence $A_{\la_n}$ is relatively dense, there exists $\rho>0$ such that for all $n$
\begin{equation}\label{rd}
	\mu_n(B_n)\ge \rho.
\end{equation} Let $\tau_n$ be such that $d\tau_n=\1_{B_n}d\mu_n$. Then there is a subsequence (denoted as the sequence itself) converging weakly* to $\tau$, which is not identically zero by \eqref{rd}. If $\overline{\mathbb{B}(a,s)}\subset\mathbb{B}(0,1)$, then $\tau_n(\overline{\mathbb{B}(a,s)})\le\mu_n(\overline{\mathbb{B}(a,s)})\ls s^m$. Thus, $\tau(\overline{\mathbb{B}(a,s)})\ls s^m$ and we have
\[0<\tau(\supp\tau)\ls \mathcal{H}^m(\supp\tau),\]
which implies that the Hausdorff dimension of $\supp\tau$ is $m$. On the other hand, let $F(z)=H(z,0)$. Then by \eqref{L2small}, we get
\[\int_{\mathbb{B}(0,1)}|F|^pd\tau=0.\]
This implies that $\supp\tau \subset F^{-1}(0)$. But $F^{-1}(0)$ is of Hausdorff dimension $m-1$, which is a contradiction.

\subsection{Dirichlet case}\label{dir}
For $\delta>0$, let $M_\delta=\{x\in M:d(x,\partial M)<\delta\}$.
\begin{lemma}\label{dirlem}
Let $1\le p<\infty$. There exist $\delta_0>0$ and $C>0$ such that	for all $0<\delta<\delta_0$, $\la\ge1$ and $f\in E_\la$, we have
\begin{equation}\label{nearbd}
	\|f\|_{L^p(M_{\delta/\la})}\le C\delta\|f\|_{L^p(M)}.
\end{equation}
\end{lemma}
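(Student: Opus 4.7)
\textbf{Proof plan for Lemma \ref{dirlem}.} The strategy is to exploit the Dirichlet boundary condition ($f|_{\partial M}=0$) together with the Bernstein inequality \eqref{bern} from Lemma \ref{corbern}. The idea is that, writing $f$ as an integral of its normal derivative from the boundary, we gain one factor of the normal distance, and the Bernstein inequality then converts the gradient into a factor of $\la$, producing the desired $\delta$.

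First, I would choose $\delta_0>0$ so small that Fermi (boundary normal) coordinates $x=(y,s)$ are valid in $M_{\delta_0}$, where $y\in\partial M$ and $s=d(x,\partial M)\in[0,\delta_0]$. In these coordinates the volume element is $dV=J(y,s)\,dS(y)\,ds$ with $J$ smooth and comparable to $1$. Under Dirichlet boundary conditions every $f\in E_\la$ satisfies $f(y,0)=0$, so the fundamental theorem of calculus gives
\begin{equation*}
f(y,s)=\int_0^s\partial_s f(y,t)\,dt,\qquad 0\le s\le\delta_0.
\end{equation*}
Taking absolute $p$-th powers and using H\"older's inequality yields $|f(y,s)|^p\le s^{p-1}\int_0^s|\partial_s f(y,t)|^p\,dt$.

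Next, I integrate over $M_{\delta/\la}$ (assuming $\delta/\la\le\delta_0$). Using the volume element comparability and Fubini's theorem,
\begin{equation*}
\int_{M_{\delta/\la}}|f|^p\,dV\ls\int_{\partial M}\!\int_0^{\delta/\la}\!\! s^{p-1}\!\!\int_0^s|\partial_s f(y,t)|^p\,dt\,ds\,dS(y),
\end{equation*}
and swapping the $s,t$ integrals bounds the inner piece by $\tfrac{1}{p}(\delta/\la)^p\int_0^{\delta/\la}|\partial_s f(y,t)|^p\,dt$. Since $|\partial_s f|\le|\nabla f|$ and the Fermi-coordinate volume element is comparable to $dV$, I obtain
\begin{equation*}
\int_{M_{\delta/\la}}|f|^p\,dV\ls(\delta/\la)^p\int_{M_{\delta/\la}}|\nabla f|^p\,dV\le(\delta/\la)^p\|\nabla f\|_{L^p(M)}^p.
\end{equation*}

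Finally, I invoke the Bernstein inequality \eqref{bern}, which gives $\|\nabla f\|_{L^p(M)}\le C\la\|f\|_{L^p(M)}$, so that the $\la$'s cancel and we get $\|f\|_{L^p(M_{\delta/\la})}^p\ls\delta^p\|f\|_{L^p(M)}^p$, which is \eqref{nearbd}. The case $\delta/\la>\delta_0$ is handled trivially by taking $C$ large enough, since then $\delta\gtrsim\la\delta_0\gtrsim 1$. The main (mild) technical point to be careful about is the validity of the Fermi coordinate chart and the comparability of $J$ to $1$ on a uniform tubular neighborhood; this is standard once $\delta_0$ is chosen smaller than the injectivity radius of $\partial M$ in $M$, and no real obstacle arises because Lemma \ref{corbern} already provides the $L^p$ Bernstein bound uniformly in $p\in[1,\infty]$.
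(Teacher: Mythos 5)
Your proof is correct and follows essentially the same route as the paper's: Fermi coordinates near the boundary, the fundamental theorem of calculus exploiting $f|_{\partial M}=0$, H\"older and an exchange of the $s,t$ integrals to extract the factor $(\delta/\la)^p$, and finally the Bernstein inequality \eqref{bern} to cancel the $\la$. (The side case $\delta/\la>\delta_0$ you mention never actually occurs, since $\la\ge1$ and $\delta<\delta_0$ force $\delta/\la<\delta_0$.)
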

By the lemma with $\delta$ small enough, we can ignore the contribution of $M_{\delta/\la}$ and only need to prove \eqref{ineq0} with $D$ replaced by $D\setminus M_{\delta/\la}$. For $\omega\in D\setminus M_{\delta/\la}$,  the  ball $B(w,r/\la)$ does not intersect the boundary of $M$ if we choose $r$ small enough.  So we can use the argument in Section \ref{bdless} to prove the Dirichlet case. 

\noindent\textbf{Proof of Lemma \ref{dirlem}.} Choose $\delta_0$ small enough so that we can use Fermi coordinates $z=(s,x)$ with $s\in [0,\delta_0/\la]$ and $x\in \partial M$. Under the Fermi coordinates, the metric is in the following form
\[ g = ds^2+\tilde{g}_{ij}(s)dx^idx^j,\]
where $\tilde{g}(s)=\tilde{g}_{ij}(s)dx^idx^j$ is the pullback of the induced metric on $$\Sigma_s = \{x\in M:d(x,\partial M) = s\}.$$ Let $d\sigma(s)$ be the volume form of $\tilde{g}(s)$, then $d\sigma(s)\approx d\sigma(0)=d\sigma$.
Using the Dirichlet boundary condition, we have
\[f(s,x)=\int_0^s\partial_\tau f(\tau,x)d\tau.\]
So we obtain for $1\le p<\infty$
\begin{align*}
	\int_{M_{\delta/\la}}|f(z)|^p&\approx \int_0^{\delta/\la}\int_{\partial M}|f(s,x)|^pd\sigma(x) ds\\
	&= \int_0^{\delta/\la}\int_{\partial M}\Big|\int_0^s\partial_\tau f(\tau,x)d\tau\Big|^pd\sigma(x) ds\\
	&\le \int_0^{\delta/\la}\int_{\partial M}s^{p-1}\int_0^s|\partial_\tau f(\tau,x)|^pd\tau d\sigma(x) ds\\
	&\le \int_0^{\delta/\la}\int_{\partial M}((\delta/\la)^p-\tau^p)|\partial_\tau f(\tau,x)|^pd\tau d\sigma(x) \\
	&\le (\delta/\la)^p\int_0^{\delta/\la}\int_{\partial M}|\partial_\tau f(\tau,x)|^pd\tau d\sigma(x) \\
	&\le (\delta/\la)^p\|\nabla f\|_{L^p(M)}^p
\end{align*}
 Since $f\in E_\la$,  by the Bernstein inequality \eqref{bern} we get 
\[\|\nabla f\|_{L^p(M)}\ls \la\|f\|_{L^p(M)}.\]
So we complete the proof of Lemma \ref{dirlem}.
\subsection{Neumann Case}\label{neu}
To exploit  the argument in Section \ref{bdless} to handle the Neumann case, we only need to establish the gradient estimate of the form \eqref{gradest}  under Neumann boundary condition.  Our proof is inspired by the book of Li \cite[Proof of Theorem 6.1]{li12}, Chen \cite{chen90}, and Wang \cite{wang97}.

We first prove the gradient estimate 
$$
\sup_{B(x_c,\rho)} |\nabla h| \ls \rho^{-1} \sup_{B(x_c,2\rho)} |h|  
$$
for sufficiently small $\rho$ under the condition $d(x_c, \partial M) \ge \frac{\rho}{2}$. For the remaining cases, covering $B(x_c,\rho)$ by finitely many balls $B(x_i,r_i)$ with $d(x_i, \partial M) \ge r_i$ yields
\begin{equation}\label{grad}
	\sup_{B(x_c,\rho)} |\nabla h| \ls \rho^{-1} \sup_{B(x_c,K\rho)} |h|  
\end{equation}
for some constant $K=K(M)$.  The estimate \eqref{grad} is slightly weaker than \eqref{gradest}, but it is sufficient for our purpose.

 Let $A=2\sup_{B(x_c,2\rho)}|h|$. Then $u=h+A$ is a positive harmonic function  in $B(x_c,2\rho)$. Let $v=\log u$ and $Q=|\nabla v|^2$. It suffices to prove $Q(x)\ls \rho^{-2}$ for all $x\in B(x_c,\rho)$ with $\rho$ sufficiently small.

Let $r(x)$ be the distance between $x$ and $x_c$. Let us choose $\phi=\phi(r(x))$ as in  the book of Li \cite[p. 60]{li12}. It is a nonnegative function with the property that 
\[\phi=1\ \text{on}\ B(x_c,\rho),\]
\[\phi=0\ \text{on}\ M\setminus B(x_c,2\rho),\]
\[-C\rho^{-1}\sqrt\phi\le \phi'\le0\ \text{and}\ |\phi''|\le C\rho^{-2}\ \text{on}\ B(x_c,2\rho)\setminus B(x_c,\rho).\]
Let $d(x)$ be  the distance between $x$ and the boundary $\partial M$ and let $$F(x)=(1+\psi(d(x)))^2\phi(r(x))(Q(x)+1),$$ where $\psi$ is a nonnegative smooth function with  $\psi(0)=0$ and $\psi'(0)=H>0$, if the second fundamental form II of $\partial M$ is bounded below by $-H$.  We first prove by contradiction that $F$ cannot achieve the maximum at any boundary point, and then we estimate the maximum in the interior.

Suppose $F$ achieves the maximum at $q\in \partial M$. Choose a Fermi coordinate at $q$ with $x=(x',x_m)$, where $x_m$ is inner normal to the boundary and $x'$ is a normal coordinate on $\partial M$ at $q$. Then we have $d(x)=x_m\ge0$ and
\begin{align*}
	\frac{\partial F}{\partial x_m}&=(1+\psi(x_m))^2\phi'(r(x))\frac{\partial r(x)}{\partial x_m}(Q(x)+1)\\
	&\ \ +2(1+\psi(x_m))\psi'(x_m)\phi(r(x))(Q(x)+1)\\
	&\ \ +(1+\psi(x_m))^2\phi(r(x))\frac{\partial Q(x)}{\partial x_m}.
	\end{align*}
We want to show $\frac{\partial F}{\partial x_m}(q) > 0$, which leads to a contradiction.  
For sufficiently small $\rho$, since $d(x_c,\partial M) \ge \frac{\rho}{2}$, we have $\frac{\partial r(x)}{\partial x_m} \leq 0$. Noting that $\phi' \leq 0$, the first term is always nonnegative. To handle the remaining two terms, we denote $u_j = \partial_{x_j} u$ and $u_{jm} = \partial_{x_j}\partial_{x_m} u$.
By the Neumann boundary condition $u_m|_{x_m=0}=0$, we have
\begin{align*}
	\frac{\partial Q(x)}{\partial x_m}\Big|_{x=q}&=\frac{2\sum_j u_ju_{jm}}{u^2}-\frac{2|\nabla u|^2u_m}{u^3}\Big|_{x=q}\\
	&=\frac{2\sum_{j<m} u_ju_{jm}}{u^2}\Big|_{x=q}\\
	&=\frac{2\text{II}(\nabla u,\nabla u)}{u^2}\Big|_{x=q}\\
	&\ge \frac{-2H|\nabla u(q)|^2}{u(q)^2},
\end{align*}
since the second fundamental form II of $\partial M$ is bounded below by $-H$. So we only need to show
	\[\psi'(0)(\frac{|\nabla u(q)|^2}{u(q)^2}+1)-(1+\psi(0))\frac{H|\nabla u(q)|^2}{u(q)^2}>0.\]
It is ensured by our choice of  $\psi$ with $\psi(0)=0$ and $\psi'(0)=H>0$.
	
Next, we assume $F$ achieves the maximum at $x_0\in M$. Then \begin{equation}\label{critical}
	\Delta F(x_0)\le 0,\  \text{and} \ \nabla F(x_0)=0.
\end{equation} Let $\varphi(x)=(1+\psi(d(x)))^2\phi(r(x))$. Since $d(x)=x_m$ is smooth, $\varphi(x)$ and its derivatives satisfy similar bounds as $\phi$. We may assume the Ricci curvature on $M$ is bounded below by $-(m-1)R$ $(R\ge0)$, since $M$ the compact. Then we repeat  the standard argument as in the book of Li \cite[Proof of Theorem 6.1]{li12} to calculate $\Delta F$ and simplify the expression evaluated at $x_0$ by the condition \eqref{critical}. As in \cite[(6.2)]{li12}, we get
\begin{align*}
	0\ge (m-1)(\Delta\varphi)F-\frac{3m-4}2|\nabla\varphi|^2\varphi^{-1}F-2(m-1)^2R\varphi F-2(m-2)|\nabla\varphi|\varphi^{-1/2}F^{3/2}+2F^2.
\end{align*}
By the Laplacian comparison theorem \cite[Theorem 4.1]{li12}, we have $\Delta\varphi\gs -\rho^{-2}$. Since  $|\nabla\varphi|^2\varphi^{-1}\ls \rho^{-2}$ and $\varphi\ls1$, we obtain
\begin{align*}
	0&\ge -C_1\rho^{-2}F-C_2 F-C_3\rho^{-1}F^{3/2}+2F^2\\
	&\gs -\rho^{-2}F+F^2
\end{align*}
So  $F(x)\le F(x_0)\ls \rho^{-2}$. Therefore, we obtain $Q(x)\ls \rho^{-2}$ for all $x\in B(x_c,\rho)$, as $\varphi(x)\approx 1$ in $B(x_c,\rho)$.
	\section{Proof of Theorem \ref{LSthm}: the necessity part}
	In this section, we prove that $\mathcal{A}$ is relatively dense if it is $L^p$-Logvinenko-Sereda. By the heuristic in the introduction, we shall construct a test function that behaves like the indicator function of the ball of radius $1/\la$.
	
	For any  fixed $r>0$ and $C>0$, if   for all $\la\ge1$ and $y\in M$ with $d(y,\partial M)\ge C/\la$,
\begin{equation}\label{rdy}
	\text{vol}(A_\la\cap B(y,r/\la))\ge \rho \la^{-m},
\end{equation}
	then there exists  $c>0$ such that for all $\la\ge1$ and $z\in M$
	\[\text{vol}(A_\la\cap B(z,(r+C)/\la))\ge c\rho\la^{-m},\]since any ball $B(z,(r+C)/\la)$ contains a ball $B(y,r/\la)$ with $d(y,\partial M)\ge C/\la$.
	So to prove $\mathcal{A}$ is relatively dense, we may assume the centers of the balls are $C/\la$ away from the boundary. We will choose sufficiently large $r$ and $C$ later to prove \eqref{rdy}.
	
	Let $\psi\in C_0^\infty(\mathbb{R})$ with $\1_{[-\frac{1}{2},\frac{1}{2}]}\le \psi\le \1_{[-1,1]}$. Given  $L$ as in Corollary \ref{longheat} such that  \eqref{heat} and \eqref{heatgrad} hold  for all $t>0$ and $x,y\in M$. We define
	\[f_{\la,y}(x)=\psi(L/\la^2)(x,y).\]
	By the kernel estimate in Corollary \ref{smoothker}, we have
	\[|f_{\la,y}(x)|\ls \la^m(1+\la d(x,y))^{-N}.\]
	Since $d(y,\partial M)>C/\la$, by Corollary \ref{awaylow} we get $|f_{\la,y}(y)|\approx \la^m.$
	Together with the Bernstein inequality \eqref{bern1}  and the mean value theorem, this implies that there exists $c>0$ independent of $y$ such that for all $x\in B(y,c/\la)$,
	\[|f_{\la,y}(x)|\approx \la^m.\]
	Thus, the function $f_{\la,y}$ behaves like the indicator function of the ball of radius $1/\la$, which corresponds to the heuristic in the introduction.  For $1\le p<\infty$ we have
	\[\int_M |f_{\la,y}|^p\ge \int_{B(y,c/\la)}|f_{\la,y}|^p\approx \la^{mp-m}.\]
	On the other hand, since $\mathcal{A}$ is $L^p$-Logvinenko-Sereda, we have for $N>m$
	\begin{align*}
		\int_M|f_{\la,y}|^p&\le C \int_{A_\la}|f_{\la,y}|^p\le C\int_{A_\la\cap B(y,r/\la)}|f_{\la,y}|^p+C\int_{M\setminus B(y,r/\la)}|f_{\la,y}|^p\\
		&\le C_1\la^{mp}\text{vol}(A_\la\cap B(y,r/\la))+C_2\la^{mp}\int_{M\setminus B(y,r/\la)}(1+\la d(x,y))^{-N}dV(x)\\
		&\le C_1\la^{mp}\text{vol}(A_\la\cap B(y,r/\la))+C_3r^{-N+m}\la^{mp-m}.
	\end{align*}
	Therefore, choosing $r$ large enough, we obtain 
	\[\text{vol}(A_\la\cap B(y,r/\la))\gs \la^{-m}.\]
	This proves \eqref{rdy}, which implies that $\mathcal{A}$ is relatively dense.
	\section{Proof of Theorem \ref{carthm}}
	Let $\mu=\{\mu_\la\}_\la$ be a sequence of measures on $M$. 	\subsection{Sufficiency part}
	Suppose there exist $r>0$ and $C>0$ such that for all $\la\ge1$ and $z\in M$, we have $$\mu_\la(B(z,r/\la))\le C\la^{-m}.$$
	 For any $f\in E_\la$, by the mean-value inequality \eqref{mv} and the $L^p$ bound \eqref{pdobd} for the harmonic extension $h$ in \eqref{ht}, we have for $1<p<\infty$
	\begin{align*}
		\int_M|f|^pd\mu_\la&\le C (\la/r)^{m+1}\int_M\int_{B(z,r/\la)}\int_{|s|\le r/\la}|h(w,s)|^pdV(w)dsd\mu_\la(z)\\
		&=C(\la/r)^{m+1}\int_M\int_{|s|\le r/\la}|h(w,s)|^p\mu_\la(B(w,r/\la))dV(w)ds\\
		&\ls (\la/r)\int_M\int_{|s|\le r/\la}|h(w,s)|^pdV(w)ds\\
		&\ls \int_M|f|^pdV.
	\end{align*}
	Thus,  $\mu$ is $L^p$-Carleson.
	\subsection{Necessity part}
 Suppose that the boundary is empty or the  boundary condition is Neumann. We assume $\mu$ is $L^p$-Carleson. We need to prove that  there exist $C>0$ and $r>0$ such that for all $\la\ge1$ and $\xi\in M$,
	\begin{equation}\label{lpcar}
		\mu_\la(B(\xi,r/\la))\le C\la^{-m}.
	\end{equation}
We choose $\psi\in C_0^\infty(\mathbb{R})$ such that 
\[\1_{[-\frac12,\frac12]}\le \psi\le \1_{[-1,1]}.\]	Let $F_{\la,\xi}(x)=\psi(L/\la^2)(\xi,x)$ where $L$ is chosen as in Corollary \ref{longheat} such that  \eqref{heat} and \eqref{heatgrad} hold  for all $t>0$ and $x,y\in M$. Then $F_{\la,\xi}\in E_{\la}$. By the kernel estimate in \eqref{kerupp}, we have
\[|F_{\la,\xi}(x)|\ls \la^m(1+\la d(\xi,x))^{-N},\ \ \forall N.\]So for $1\le p\le \infty$ we get
\begin{equation}\label{Lpbd}
	\|F_{\la,\xi}\|_{L^p(M)}\ls\la^{m(1-\frac1p)}.
\end{equation}
Moreover, by using the two-sided estimate in \eqref{twosideest}, we obtain $F_{\la,\xi}(\xi)\approx \la^m$. Together with the Bernstein inequality \eqref{bern1}  and the mean value theorem, this implies that there exists $r>0$ independent of $\xi$  such that for all $x\in B(\xi,r/\la)$ $$|F_{\la,\xi}(x)|\approx \la^m.$$ Thus, the function $F_{\la,\xi}$ behaves like the indicator function of the ball of radius $1/\la$, which corresponds to the heuristic in the introduction.

So we get for $1\le p<\infty$
\begin{align*}
 \int_M|F_{\la,\xi}|^pd\mu_\la\ge \int_{B(\xi,r/\la)}|F_{\la,\xi}|^pd\mu_\la\gs\la^{mp}\mu_\la(B(\xi,r/\la)).
\end{align*}
Together with \eqref{Lpbd} and the assumption that $\mu$ is $L^p$-Carleson, this implies \eqref{lpcar}.

\subsection{Counterexample for Dirichlet boundary condition} Suppose that the boundary condition is Dirichlet. We construct an example to show that the relatively sparse condition is not necessary for $\mu$ to be $L^p$-Carleson. Fix $z\in \partial M$. Let 
\[d\mu_\la=dV+\delta_{z},\] where $\delta_z$ is the Dirac measure supported at $z$. For any $f=\sum_{\la_j\le \la}\alpha_je_j\in E_\la$,  we have $f(z)=0$ by the Dirichlet boundary condition. Then for $1\le p<\infty$,
\begin{align*}
	\int_M|f|^pd\mu_\la=\int_M|f|^pdV.
\end{align*}
So $\mu$ is $L^p$-Carleson. On the other hand, when $z\in B(\xi,1/\la)$, we have
\[\mu_\la(B(\xi,1/\la))= {\rm vol}(B(\xi,1/\la))+1.\]
But ${\rm vol}(B(\xi,1/\la))\approx \la^{-m}\ll1$ for sufficiently large $\la$. So $\mu$ is not relatively sparse.

\section{Logvinenko-Sereda sets for eigenfunctions}
\subsection{Proof of Proposition \ref{smallpls}} For $1\le p< \frac{2m}{m-1}$, we need to show the relatively dense condition  \eqref{RDdef} is not necessary for $\mathcal{A}$ to be  $L^p$-Logvinenko-Sereda  for eigenfunctions on $M$. We fix any $z\in M$. For any ball $B(z,r)$  and any $L^2$-normalized eigenfunction $e_\la$ we have $\|e_\la\|_{L^\infty(M)}\ls \la^{\frac{m-1}2}$ and then for $1\le p< \infty$
$$\int_{B(z,r)}|e_\la|^pdV\ls \la^{\frac{m-1}2p}r^{m}.$$ See e.g. Sogge \cite{fio}.
On the other hand, by H\"older inequality we get
\[\int_M|e_\la|^pdV\gs \begin{cases}1,\ \ \ \ \ \ \ \ \ \ \ \ \ \ 2\le p<\infty,\\
	\la^{-\frac{m-1}2(2-p)},\ \ 1\le p<2.
\end{cases}\]
Let 
\[R_\la=\begin{cases}
	c(\la)\la^{-\frac{m-1}{2m}p},\ \ 2\le p<\infty\\
	c(\la)\la^{-\frac{m-1}m},\ \ \ 1\le p<2,
\end{cases}\]
where $c(\la)=(\log\la)^{-1}$.
Let $$A_\la=M\setminus B(z, R_\la).$$ Then for any $\eps>0$, we have for sufficiently large $\la$
\begin{equation}\label{Lpball}
	\int_{B(z,R_\la)}|e_\la|^pdV\le \eps \int_{M}|e_\la|^pdV,
\end{equation}
which implies that
\[	\int_{A_\la}|e_\la|^pdV\approx \int_{M}|e_\la|^pdV.\]
So $\mathcal{A}=\{A_\la\}_\la$ is $L^p$-Logvinenko-Sereda for eigenfunctions. On the other hand, we have $R_\la\gg \la^{-1}$ for $1\le p<\frac{2m}{m-1}$ and sufficiently large $\la$. So $\mathcal{A}$ is not relatively dense, since $A_\la\cap B(z,r/\la)=\emptyset$ for any $r>0$ and sufficiently large $\la$. 

To handle the kink point $p=\frac{2m}{m-1}$, we recall that $\|e_\la\|_{L^\infty(M)}=o( \la^{\frac{m-1}2})$ for a generic metric on any compact manifold. See Sogge-Zelditch \cite[Theorem 1.4]{sz01}. So we can pick $R_\la=c(\la)\la^{-1}$ with $c(\la)\nearrow \infty$ as $\la\to\infty$ such that for any $\eps>0$ and sufficiently large $\la$
\[\int_{B(z,R_\la)}|e_\la|^pdV\le \|e_\la\|_{L^\infty(M)}^p\text{vol}(B(z,R_\la))\le \eps\int_M|e_\la|^pdV.\]
As before, if we define $A_\la=M\setminus B(z, R_\la)$, then
\[	\int_{A_\la}|e_\la|^pdV\approx \int_{M}|e_\la|^pdV.\]
So $\mathcal{A}=\{A_\la\}_\la$ is $L^p$-Logvinenko-Sereda for eigenfunctions. But it is not relatively dense since $R_\la\gg \la^{-1}$ and then $A_\la\cap B(z,r/\la)=\emptyset$ for any $r>0$ and sufficiently large $\la$. 

\subsection{Proof of Proposition \ref{smallplssphere}} We just need to slightly modify the proof of Proposition \ref{smallpls}. Indeed, we can replace the definition of $A_\la$ by $M\setminus \cup_\pm B(\pm z,R_\la)$ on the sphere $S^m$ and show that $$A_\la\cap \cup_\pm B(\pm z,r/\la)=\emptyset$$ for any $r>0$ and sufficiently large $\la$. This shows that $\mathcal{A}$ is not relatively dense.

\subsection{Proof of Theorem \ref{largepls}}
Let $M=S^m$ be the standard sphere. For $A_\la\subset S^m$, let $$\tilde A_\la=A_\la\cup (-A_\la).$$
By definition, it is easy to see that $\mathcal{A}=\{A_\la\}_\la$ is symmetric relatively dense if and only if $ \tilde{\mathcal{A}}=\{\tilde A_\la\}_\la$ is relatively dense. 

Suppose $\mathcal{A}=\{A_\la\}_\la$ is symmetric relatively dense. By Theorem \ref{LSthm},   $ \tilde{\mathcal{A}}$ is $L^p$-Logvinenko-Sereda. The eigenfunctions on $S^m$ are the restriction to $S^m$ of homogeneous harmonic polynomials in $\mathbb{R}^{m+1}$, so they must be either symmetric or antisymmetric. So
\[2\int_{A_\la}|e_\la|^pdV=\int_{A_\la}|e_\la|dV+\int_{-A_\la}|e_\la|^pdV\ge \int_{\tilde A_\la}|e_\la|^pdV\gs \int_M|e_\la|^pdV.\]
So $\mathcal{A}=\{A_\la\}_\la$ is $L^p$-Logvinenko-Sereda for eigenfunctions on $S^m$.

On the other hand, we can use zonal functions as test functions to show that the symmetric relatively dense condition is necessary. Let \begin{equation}\label{zonal}
	Z_{\la,\xi}(x)=\la^{-\frac{m-1}2}\sum_{\la_j=\la}e_j(x)\overline{e_j(\xi)}.
\end{equation}
We have $\|Z_{\la,\xi}\|_{L^p(M)}\approx 1$ for $1\le p<\frac{2m}{m-1}$ and  $\|Z_{\la,\xi}\|_{L^p(M)}\approx \la^{\frac{m-1}2-\frac mp}$ for $\frac{2m}{m-1}<p\le\infty$. At the kink point $p=\frac{2m}{m-1}$, we have $\|Z_{\la,\xi}\|_{L^p(M)}\approx (\log\la)^{\frac1p}$.  Moreover, $|Z_{\la,\xi}(x)|\approx \la^{\frac{m-1}2}$  for $x\in B(\pm\xi,1/\la)$ and we have $$|Z_{\la,\xi}(x)|\le C\la^{m-1}(1+\la d(x,\xi))^{-\frac{m-1}2}$$ whenever $1/\la\le d(x,\xi)\le3\pi/4$. See e.g. Sogge \cite{soggeprob}.

Suppose  $\mathcal{A}$ is $L^p$-Logvinenko-Sereda for eigenfunctions on $S^m$. We fix any $\xi\in S^m$. Then we have for $p>\frac{2m}{m-1}$
\begin{align*}
	\la^{\frac{m-1}2p- m}&\approx \int_M|Z_{\la,\xi}|^pdV\le C \int_{A_\la}|Z_{\la,\xi}|^pdV\\
	&\le C\int_{ A_\la\cap B(\xi,r/\la)}|Z_{\la,\xi}|^p+C\int_{ A_\la\cap B(-\xi,r/\la)}|Z_{\la,\xi}|^p+C\int_{S^m\setminus \cup_{\pm}B(\pm \xi,r/\la)}|Z_{\la,\xi}|^p\\
	&\le C_1\la^{\frac{m-1}2p}(\text{vol}( A_\la\cap B(\xi,r/\la))+\text{vol}( A_\la\cap B(-\xi,r/\la)))\\
	&\quad\quad+C_2\la^{(m-1)p}\int_{r/\la\le d(x,\xi)\le 3\pi/4}(1+\la d(x,\xi))^{-\frac{m-1}2p}dV(x)\\
	&\le C_1\la^{\frac{m-1}2p}(\text{vol}( A_\la\cap B(\xi,r/\la))+\text{vol}( A_\la\cap B(-\xi,r/\la)))+C_3r^{m-\frac{m-1}2p}\la^{\frac{m-1}2p-m}.
\end{align*}
Therefore, choosing $r$ large enough, we obtain 
\[\text{vol}( A_\la\cap B(\xi,r/\la))+\text{vol}( A_\la\cap B(-\xi,r/\la))\gs \la^{-m}.\]
So $\mathcal{A}$ is symmetric relatively dense.

\section{Carleson measures for eigenfunctions}\label{careigen}

\subsection{Proof of Proposition \ref{smallp}}For $1\le p< \frac{2m}{m-1}$, we need to show the relatively sparse condition is not necessary for $\mu$ to be $L^p$-Carleson for eigenfunctions on the any compact manifold $M$. We fix any $\xi\in M$ and we define
\begin{equation}\label{mumeas}
	d\mu_\la=c(\la)\1_{B(\xi,1/\la)}dV,
\end{equation}
where $c(\la)=\log\la$.
Clearly, the relatively sparse condition is not satisfied since $c(\la)\nearrow \infty$  as $\la\to\infty$. For any $L^2$-normalized eigenfunction $e_\la$ on $M$, we have $\|e_\la\|_{L^\infty(M)}\ls \la^{\frac{m-1}2}$.
See e.g. Sogge \cite{fio}. Then for $1\le p< \infty$
\[\int_M |e_\la|^pd\mu_\la\ls c(\la) \la^{(\frac{m-1}2-\frac mp)p}.\]
On the other hand, by H\"older inequality we get
\[\int_M|e_\la|^pdV\gs \begin{cases}1,\ \ \ \ \ \ \ \ \ \ \ \ \ \ 2\le p<\infty,\\
	\la^{-\frac{m-1}2(2-p)},\ \ 1\le p<2.
\end{cases}\]
Thus, for any $1\le p<\frac{2m}{m-1}$ we have
\[\Big(\frac{m-1}2-\frac mp\Big)p<\min\Big\{0,-\frac{m-1}2(2-p)\Big\}.\]
Then for $1\le p<\frac{2m}{m-1}$, we have
\[\int_M |e_\la|^pd\mu_\la\le C\int_M|e_\la|^pdV.\]
So $\mu$ is $L^p$-Carleson for eigenfunctions on $M$, though it is not relatively sparse. 

For the kink point $p=\frac{2m}{m-1}$, we recall that $\|e_\la\|_{L^\infty(M)}=o( \la^{\frac{m-1}2})$ for a generic metric on any compact manifold. See Sogge-Zelditch \cite[Theorem 1.4]{sz01}. Thus, for $\mu_\la$ in \eqref{mumeas}, there exists a function $c(\la)\nearrow \infty$ as $\la\to\infty$ such that
\[\int_M |e_\la|^pd\mu_\la\le c(\la)\|e_\la\|_{L^\infty(M)}^p\text{vol}(B(\xi,1/\la))\ls 1\ls \int_M|e_\la|^pdV.\]
So $\mu$ is $L^p$-Carleson for eigenfunctions on  $M$ with a generic metric, though it is not relatively sparse.

\subsection{Proof of Theorem \ref{largep}} We only need to prove the necessity part, as the sufficiency part follows from  Theorem \ref{carthm}. Suppose $\mu=\{\mu_\la\}_\la$ is $L^p$-Carleson for eigenfunctions. We fix any $\xi\in S^m$. For the zonal functions $Z_{\la,\xi}$ in \eqref{zonal},  we have for $p>\frac{2m}{m-1}$
 \[\la^{\frac{m-1}2p-m}\approx \int_M|Z_{\la,\xi}|^pd\mu_\la\ge \int_{B(\xi,1/\la)}|Z_{\la,\xi}|^pd\mu_\la\approx \la^{\frac{m-1}2p}\mu_\la(B(\xi,1/\la)).\]
Thus, we have
\[\mu_\la(B(\xi,1/\la))\ls \la^{-m}.\]
So $\mu$ is relatively sparse. 

\section{Further discussions}\label{gausssect} It is open to characterize $L^p$-Logvinenko-Sereda sets and $L^p$-Carleson measures for eigenfunctions on $M$ for $1\le p\le \frac{2m}{m-1}$, even if $M$ is the standard sphere. In this section, we  give some necessary conditions on the standard sphere $S^m$ by choosing Gaussian beams on $S^m$ as test functions. We also propose a conjecture and provide some evidence. 

For $R>0$, let $\mathcal{T}_{R}(\gamma)$ be the $R$-neighborhood of a closed geodesic $\gamma$ in the sphere $S^m$. Then we  have $$\text{vol}(\mathcal{T}_{\la^{-1/2}}(\gamma))\approx \la^{-\frac{m-1}2}.$$  In this section, we fix $G_{\la,\gamma}$ to be the $L^2$-normalized Gaussian beam concentrated along the geodesic $\gamma$. Then we have for $1\le p\le \infty$ $$\|G_{\la,\gamma}\|_{L^p(M)}\approx \la^{\frac{m-1}2(\frac12-\frac1p)}.$$ Moreover, $|G_{\la,\gamma}(x)|\approx \la^{\frac{m-1}4}$ for most  $x\in \mathcal{T}_{\la^{-1/2}}(\gamma)$, and 
\[|G_{\la,\gamma}(x)|\le C_N\la^{\frac{m-1}4}(1+\sqrt\la d(x,\gamma))^{-N},\ \forall N.\]
Here $d(x,\gamma)$ is the distance from $x$ to the geodesic $\gamma$. See e.g. Sogge \cite{soggeprob}.
 \subsection{Tubular geometric control condition for Logvinenko-Sereda sets for eigenfunctions on the standard sphere}
 Let $M$ be the standard sphere $S^m$. Suppose $\mathcal{A}=\{A_\la\}_\la$ is $L^p$-Logvinenko-Sereda for eigenfunctions on $M$. We fix any closed geodesic $\gamma\subset S^m$. For any $c>0$,  we get for $N>m$
\begin{align*}
	\la^{\frac{m-1}2(\frac12-\frac1p)p}&\approx \int_M|G_{\la,\gamma}|^pdV  \ls \int_{A_\la}|G_{\la,\gamma}|^pdV\\
	& \le \int_{A_\la\cap \mathcal{T}_{c\la^{-1/2}}(\gamma)}|G_{\la,\gamma}|^pdV +\int_{M\setminus \mathcal{T}_{c\la^{-1/2}}(\gamma)}|G_{\la,\gamma}|^pdV\\
	&\le C_1 \la^{\frac{m-1}4p}\text{vol}(A_\la\cap \mathcal{T}_{c\la^{-1/2}}(\gamma))+C_N\la^{\frac{m-1}4p}\int_{r>c\la^{-1/2}}(1+\sqrt\la r)^{-N}r^{m-2}dr\\
	&\le C_1 \la^{\frac{m-1}4p}\text{vol}(A_\la\cap \mathcal{T}_{c\la^{-1/2}}(\gamma))+C_N'c^{-N+m}\la^{\frac{m-1}4p-\frac{m-1}2}.
\end{align*}
 So we can choose $c$ sufficiently large to obtain
\begin{equation}
	\text{vol}(A_\la\cap \mathcal{T}_{c\la^{-1/2}}(\gamma))\gs \la^{-\frac{m-1}2}.
\end{equation}
So a necessary condition for $\mathcal{A}$ to be $L^p$-Logvinenko-Sereda for eigenfunctions on the standard sphere $S^m$ is that there exist $c>0$ and $\rho>0$ such that for any closed geodesic $\gamma\subset S^m$ and $\la\ge1$,
\begin{equation}\label{gtube}
	\frac{\text{vol}(A_\la\cap \mathcal{T}_{c\la^{-1/2}}(\gamma))}{\text{vol}(\mathcal{T}_{c\la^{-1/2}}(\gamma))}\ge \rho.
\end{equation}
We  expect that this condition is also sufficient for $1\le p<\frac{2m}{m-1}$. Indeed, this condition implies that the inequality 
\[\int_{A_\la}|e_\la|^pdV\gs \int_M|e_\la|^pdV\] holds for $1\le p<\frac{2m}{m-1}$ whenever $e_\la$ is a Gaussian beam or zonal function. For the Gaussian beam $G_{\la,\gamma}$, we exploit the fact that $|G_{\la,\gamma}(x)|\gs \la^{\frac{m-1}4}$ for most of $x\in\mathcal{T}_{c\la^{-1/2}}(\gamma)$. For the zonal function $Z_{\la,\xi}$, we exploit the fact that  $\text{vol}(A_\la)\gs1$ and $|Z_{\la,\xi}(x)|\gs1$ for most of $x\in A_\la$. 

\begin{remark}
	 Nicolas Burq presented us with two other examples of spherical harmonics that may concentrate on geodesically curved curves on $S^2$, see \cite[Remark 5.4]{BGT}. By careful calculations, we find that  the condition \eqref{gtube} is still sufficient for these two examples when $p=2$.
\end{remark}\textbf{}
%\begin{remark}
%	 Nicolas Burq suggested two examples of eigenfunctions that may concentrate on geodesically curved curves on the two-dimensional sphere. These examples are also presented in \cite[Remark 5.4]{BGT}, where they illustrate the sharpness of the curved restriction estimates in that work. The first example involves eigenfunctions concentrating on parallel circles, such as the spherical harmonics $Y_l^m$ with $m=[l/2]$. The second consists of a superposition of Gaussian beams concentrated on different geodesics. After careful verification, we confirm that condition \eqref{gtube} is still sufficient in both cases. In the first case, $Y_l^m$ behaves similarly to a zonal harmonic between the two parallel circles. In the second case, the eigenfunction behaves like a Gaussian beam, which has a lower bound of $\lambda^{\frac{1}{6}}$ on a wide range ($\sim \lambda^{-\frac{1}{3}}$) near the equator in the $x_1$-$x_2$ plane and outside an $\epsilon$-neighborhood of the points $(\pm 1,0,0)$.
%\end{remark}
Further support for the sufficiency of  condition \eqref{gtube} comes from  control theory. It is well-known that if $\omega \subset M$ is an open set and satisfies the geometric control condition (GCC), that is, for some time $T>0$, every geodesic of length $T$ intersects $\omega$. Then the following observability inequality holds for solution $u(t,x)$ to the homogeneous wave equation on $M$:
$$
\|(u(0,\cdot), u_t(0,\cdot))\|_{L^2(M) \times H^{-1}(M)}^2 \le C_T(\omega) \int_0^T \int_{\omega} |u(t,x)|^2dV(x)dt.
$$
See e.g. \cite{BLR, BG97,RT74,HPT}. In particular, taking $u = e^{it\sqrt{-\Delta}} e_\lambda = e^{it\lambda} e_\lambda$, we obtain
$$
\|e_\lambda\|_{L^2(M)} \lesssim \|e_\lambda\|_{L^2(\omega)}.
$$
Drawing on this similarity, we refer to condition \eqref{gtube} as the \textbf{tubular geometric control condition} (TGCC). Based on the preceding discussion, we propose the following conjecture.
\begin{conj}\label{tubeconj}
	For $1\le p< \frac{2m}{m-1}$, $\mathcal{A}$ is $L^p$-Logvinenko-Sereda for eigenfunctions on the standard sphere $S^m$ if and only if $\mathcal{A}$ satisfies the tubular geometric control condition.
\end{conj}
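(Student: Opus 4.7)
The necessity direction has already been verified in the excerpt by testing against Gaussian beam eigenfunctions $G_{\la,\gamma}$, so the task reduces to proving sufficiency: for $1\le p<\frac{2m}{m-1}$ the TGCC \eqref{gtube} implies $\int_{A_\la}|e_\la|^p\,dV\gs \int_{S^m}|e_\la|^p\,dV$ for every spherical harmonic $e_\la$. I would proceed by a compactness/contradiction argument in the spirit of Section~4, reducing the problem to extracting a concentration profile on a single tube and then invoking TGCC on that tube.

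Concretely, suppose sufficiency fails. Then there exist $\la_n\to\infty$, $L^2$-normalized spherical harmonics $e_n=e_{\la_n}$, and sets $A_n=A_{\la_n}$ satisfying TGCC with uniform constants $c,\rho$, such that $\int_{A_n}|e_n|^p\to 0$ while $\int_{S^m}|e_n|^p\gs 1$ (the latter is automatic for $2\le p<\frac{2m}{m-1}$ by H\"older, and for $p<2$ one normalizes so it holds). The plan is to identify a dominant concentration geodesic $\gamma_n$, then extract a Gaussian-beam type limit profile on $\gamma_n$ and contradict the vanishing of $\int_{A_n}|e_n|^p$.

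The main step is a \emph{tube decomposition}. I would build a partition of unity on the space $\Gamma$ of oriented great circles adapted to the scale $\la_n^{-1/2}$: choose a maximal $\la_n^{-1/2}$-separated set $\{\gamma_\alpha\}$ in $\Gamma$ and smooth tube cutoffs $\chi_\alpha$ supported in $\mathcal{T}_{2c\la_n^{-1/2}}(\gamma_\alpha)$ with $\sum_\alpha \chi_\alpha^2\equiv 1$. Writing $e_n=\sum_\alpha \chi_\alpha e_n$, the crux is a \emph{tube-dominance estimate} $\sum_\alpha \|\chi_\alpha e_n\|_{L^p}^p\gs \|e_n\|_{L^p}^p$, a form of quasi-orthogonality in $L^p$ that should hold precisely because in the range $p<\frac{2m}{m-1}$ the Sogge $L^p$-extremizers among spherical harmonics are Gaussian beams (each $L^p$-concentrated on a single tube) rather than zonals. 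Granting this, a pigeonhole yields a dominant tube index $\alpha_n$ with $\|\chi_{\alpha_n}e_n\|_{L^p}^p\gs 1$; after rescaling, $\chi_{\alpha_n}e_n$ should converge (up to subsequence) to a Gaussian-beam profile $G_\infty$ concentrated along $\gamma_{\alpha_n}$. Finally, TGCC applied to $\gamma_{\alpha_n}$ gives $\text{vol}(A_n\cap \mathcal{T}_{c\la_n^{-1/2}}(\gamma_{\alpha_n}))\gs \la_n^{-(m-1)/2}$, which combined with the pointwise lower bound $|G_\infty|\gs \la_n^{(m-1)/4}$ on a density portion of the tube yields $\int_{A_n}|e_n|^p\gs 1$, the desired contradiction.

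The chief obstacle is the tube-dominance estimate itself; this is a sharp form of Sogge's $L^p$-bound asserting that spherical harmonics decompose quasi-orthogonally in $L^p$ along angular tube partitions below the kink exponent. I expect its proof to require bilinear/multilinear machinery on $S^m$ in the style of Burq--G\'erard--Tzvetkov \cite{BGT}, combined with an angular Littlewood--Paley decomposition in geodesic directions. The strict inequality $p<\frac{2m}{m-1}$ is essential: at the kink, zonal harmonics simultaneously saturate Sogge's bound and concentrate at poles rather than along tubes, which produces the counterexamples in Proposition~\ref{smallplssphere} and blocks any purely tube-geometric characterization. A secondary difficulty is handling interference between Gaussian-beam pieces from nearby geodesics, which likely demands a careful choice of $\chi_\alpha$ respecting the symplectic geometry of $T^*S^m$ near the cosphere bundle.
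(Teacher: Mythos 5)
The statement you are addressing is stated in the paper as a \emph{conjecture}: the paper proves only the necessity direction (by testing against the Gaussian beams $G_{\la,\gamma}$) and offers as evidence that the TGCC \eqref{gtube} implies the desired lower bound for a few explicit families of eigenfunctions (Gaussian beams, zonal functions, and the examples from \cite[Remark 5.4]{BGT}). There is no proof of sufficiency in the paper, and your proposal does not supply one either: the ``tube-dominance estimate'' $\sum_\alpha\|\chi_\alpha e_n\|_{L^p}^p\gs\|e_n\|_{L^p}^p$ is exactly the unproven core of the problem, and you acknowledge as much. A proposal whose key lemma is ``a sharp form of Sogge's bound that I expect requires bilinear machinery'' is a research program, not a proof, so the sufficiency direction remains open.

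Beyond the missing lemma, two subsequent steps would fail even if it were granted. First, the pigeonhole: there are on the order of $\la_n^{m-1}$ tubes in a maximal $\la_n^{-1/2}$-separated family of great circles, so $\sum_\alpha\|\chi_\alpha e_n\|_{L^p}^p\gs1$ only yields a tube with $\|\chi_{\alpha_n}e_n\|_{L^p}^p\gs\la_n^{-(m-1)}$, not $\gs1$; a single dominant tube need not exist (equidistributed eigenfunctions spread their $L^p$ mass over essentially all tubes). Second, the profile extraction: knowing that $\chi_{\alpha_n}e_n$ carries a definite amount of $L^p$ mass on a tube does not force it to resemble a Gaussian beam pointwise, and in particular does not give the lower bound $|e_n|\gs\la_n^{(m-1)/4}$ on a positive-density subset of the tube. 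Without such a pointwise (or reverse-H\"older) control, the TGCC volume bound ${\rm vol}(A_n\cap\mathcal{T}_{c\la_n^{-1/2}}(\gamma_{\alpha_n}))\gs\la_n^{-(m-1)/2}$ cannot be converted into $\int_{A_n}|e_n|^p\gs1$, since the $L^p$ mass inside the tube could live on a small subset disjoint from $A_n$. (A minor further point: for $p<2$ the normalization $\int_{S^m}|e_n|^p\gs1$ is not automatic; H\"older only gives $\gs\la_n^{-\frac{m-1}{2}(2-p)}$, so the quantitative bookkeeping in your contradiction setup needs to be redone in that range.)
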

If the conjecture holds for some  $1 \le p < \frac{2m}{m-1}$, then it implies that, the $L^p$-norm of any sequence of spherical harmonics cannot concentrate within the  $o(\lambda^{-\frac{1}{2}})$-neighborhood of the equator. 

\subsection{Tubular geometric control condition for Carleson measures for eigenfunctions on the standard sphere}
Let $M$ be the standard sphere $S^m$. Suppose a family of measures $\mu = \{\mu_\lambda\}_\lambda$ is $L^p$-Carleson for eigenfunctions on $M$. Similarly, testing Gaussian beams yields the following necessary condition: there exists a constant $C>0$ such that for any closed geodesic $\gamma \subset S^m$ and any $\lambda \ge 1$,
\begin{equation}\label{tube}
	\frac{\mu_\lambda(\mathcal{T}_{\lambda^{-1/2}}(\gamma))}{\text{vol}(\mathcal{T}_{\lambda^{-1/2}}(\gamma))} \le C.
\end{equation}

However, this condition is not sufficient for any $1<p<\infty$. Indeed, if we fix any $\xi\in S^m$ and let 
\[d\mu_\la=\la^{\frac{m+1}2}\1_{B(\xi,\la^{-1})}dV,\]
then it satisfies the condition \eqref{tube} since for any closed geodesic $\gamma\in S^m$ we have
\[\mu_\la(\mathcal{T}_{\la^{-1/2}}(\gamma))\ls \la^{\frac{m+1}2}\la^{-m}\approx \text{vol}(\mathcal{T}_{\la^{-1/2}}(\gamma)).\]
For the zonal functions in \eqref{zonal}, we have
\[\int_M|Z_{\la,\xi}|^pd\mu_\la\approx \la^{\frac{m-1}2p}\la^{\frac{m+1}2}\la^{-m}=\la^{\frac{m-1}2(p-1)},\]
and
\[\int_M|Z_{\la,\xi}|^pdV\approx \begin{cases}
1,\ \ \quad\quad\quad\quad\quad\quad1\le p<\frac{2m}{m-1}\\
\log\la,\ \ \quad\quad\quad\quad \quad p=\frac{2m}{m-1}\\
	\la^{\frac{m-1}2p-m},\ \ \quad\quad\quad\frac{2m}{m-1}<p<\infty.
\end{cases}\]
So $\mu$ is not $L^p$-Carleson for eigenfunctions on the standard sphere $S^m$ for any $1<p<\infty$.

		\bibliographystyle{plain}

\begin{thebibliography}{0}
		
		
		
		
			\bibitem{ava}V. G. Avakumovi\'c. \"Uber die Eigenfunktionen auf geschlossenen Riemannschen Mannigfaltigkeiten.
			Math. Z., 65:327–344, 1956.
				\bibitem{BLR}C. Bardos, G. Lebeau, J. Rauch, Sharp sufficient conditions for the observation, control, and stabilization
			of waves from the boundary, SIAM J. Control Optim. 30 (1992), no. 5, 1024–1065
				\bibitem{BG97}N. Burq and P. G{\'e}rard, Condition n{\'e}cessaire et suffisante pour la contr{\^o}labilit{\'e} exacte des ondes,
			C. R. Acad. Sci. Paris S{\'e}r. I Math. 325 (1997), no. 7, 749–752.
			\bibitem{BGT}N. Burq, P. G\'erard, and N. Tzvetkov. Restriction of the Laplace-Beltrami eigenfunctions to submanifolds. Duke Math. J., 138:445–486, 2007.
			\bibitem{cco}G. Carron, T. Coulhon and E.M. Ouhabaz. Gaussian estimates and $L^p$-boundedness of Riesz means. J.
			Evol. Equ. 2(3), 299–317 (2002)
			\bibitem{cgt82}J. Cheeger and M. Gromov and M. Taylor. Finite propagation speed, kernel estimates for functions of the
			Laplace operator, and the geometry of complete Riemannian manifolds. J. Differ. Geom. 17, 15–53
			(1982)
			\bibitem{chen90}R. Chen.
			Neumann eigenvalue estimate on a compact Riemannian manifold.
			Proc. Amer. Math. Soc. 108 (1990), no. 4, 961–970.
			\bibitem{cs74}J. Clerc and E. Stein. $L^p$-multipliers for noncompact symmetric spaces. Proc. Natl. Acad. Sci. USA 71, 3911–3912 (1974)
			\bibitem{cd00}T. Coulhon and X. Duong. Maximal regularity and heat kernel bounds: observations on a theorem by Hieber and Pr\"uss, Adv. in Diff. Eq. 5 (2000), 343–368.
			\bibitem{dav}E. B. Davies. Heat kernels and spectral theory, Cambridge University Press, 1989.
			\bibitem{dm99}X. Duong and A. McIntosh. Singular integral operators with non-smooth kernels on irregular domains. Rev.
			Math. Iberoam. 15, 233–265 (1999)
			\bibitem{dos02}X. Duong and E. Ouhabazand A. Sikora. Plancherel-type estimates and sharp spectral multipliers. J. Funct.
			Anal. 196, 443–485 (2002)
			\bibitem{dr96}X. Duong and D. Robinson. Semigroup kernels, Poisson bounds and holomorphic functional calculus,
			J. Funct. Anal., 142 (1996) no. 1, 89–128.
			\bibitem{dy05}X. Duong and L. Yan. Duality of Hardy and BMO spaces associated with operators with heat kernel bounds.
			J. AMS 18, 943–973 (2005)
			\bibitem{dy005}X. Duong and L. Yan. New function spaces of BMO type, the John–Nirenberg inequality, interpolation, and
			applications. Commun. Pure Appl. Math. LVIII, 1375–1420 (2005)
			\bibitem{DJ18}S. Dyatlov and L.  Jin. Semiclassical measures on hyperbolic surfaces have full support. Acta Math. 220,
			297–339 (2018)
			\bibitem{DJ21}S. Dyatlov, L. Jin and S. Nonnenmacher. Control of eigenfunctions on surfaces of variable curvature. J.
			Am. Math. Soc. 35(2), 361–465 (2021)
			%\bibitem{dan00}D. Daners. Heat kernel estimates for operators with boundary conditions. Math. Nachr. 217
		%	(2000), 13–41.
		%	\bibitem{dav88}E.B. Davies. Gaussian upper bounds for the heat kernel of some second-order operators on Riemannian
		%	manifolds, J. Funct. Anal. 80(1988).16-32.
		%	\bibitem{GLSC11}P. Gyrya and L. Saloff-Coste. Neumann and Dirichlet heat kernels in inner uniform domains. Astérisque No. 336 (2011)
			\bibitem{GT98}D. Gilbarg and N.S. Trudinger. Elliptic Partial Differential Equations of Second Order. Springer, 1998.
			\bibitem{gre71}P. Greiner, An asymptotic expansion for the heat equation, Arch. Rat. Mech. Anal. 41 (1971), 168-218.
			\bibitem{HJ94}V. Havin and B. Jöricke, The Uncertainty Principle in Harmonic Analysis, Springer-
			Verlag, Berlin, 1994.
			
			\bibitem{HSC01}W. Hebisch and  L. Saloff-Coste. On the relation between elliptic and parabolic Harnack inequalities. Ann. Inst. Fourier (Grenoble) 51 (2001), no. 5, 1437–1481.
			\bibitem{hor}L. H\"ormander. The analysis of linear partial differential operators III. Pseudodifferential operators,
			Springer-Verlag, Berlin, 1985.
				\bibitem{hor4}L. H\"ormander. The Analysis of Linear Partial Differential Operators IV: Fourier Integral Operators, Springer-Verlag, Berlin, 1994.
					\bibitem{HPT}E. Humbert, Y. Privat and E. Tr{\'e}lat, 2019. Observability properties of the homogeneous wave equation on a closed manifold. Commun. Part. Diff. Eq., 44(9), pp.749-772.
			\bibitem{IO22}R. Imekraz and E.M.  Ouhabaz.  Bernstein inequalities via the heat semigroup. Math. Ann. 382 (2022), no. 1-2, 783–819.
			\bibitem{ivrii}Ivrii, V.: Second term of the spectral asymptotic expansion for the Laplace–Beltrami operator on manifold with boundary. Funct. Anal. Appl. 14(2), 98–106 (1980)
			\bibitem{lev1}B. M. Levitan. On the asymptotic behavior of the spectral function of a self-adjoint differential
			equation of the second order. Izvestiya Akad. Nauk SSSR. Ser. Mat., 16:325–352, 1952.
			
			\bibitem{lev2}B. M. Levitan. On the asymptotic behavior of a spectral function and on expansions in eigenfunctions
			of a self-adjoint differential equation of the second order II. Izvestiya Akad. Nauk SSSR. Ser. Mat.,
			19:33–58, 1955.
			\bibitem{li12}P. Li. 
			Geometric analysis.
			Cambridge Stud. Adv. Math., 134
			Cambridge University Press, Cambridge, 2012. 
		
			\bibitem{liyau}P. Li and S.-T. Yau, On the parabolic kernel of the Schr\"odinger operator, Acta Math. 156
			(1986), no. 3-4, 153–201.
			\bibitem{LS74}V. N. Logvinenko and J. F. Sereda, Equivalent norms in spaces of entire functions of
			exponential type, Teor. funktsii, funkt. analiz i ich prilozhenia 20 (1974), 102–111,175.
			
			\bibitem{L83}D. H. Luecking, Equivalent norms on $L^p$ spaces of harmonic functions, Monatsh.
			Math. 96 (1983), no. 2, 133–141.
			\bibitem{MO08}J. Marzo and J. Ortega-Cerd\`a, Equivalent norms for polynomials on the sphere, Int.
			Math. Res. Not. IMRN (2008), no. 5, Art. ID rnm 154, 18p.
			\bibitem{mmv09}G. Mauceri and S. Meda and G. Vallarino.  Estimates for functions of the Laplacian on manifolds with bounded
			geometry. Math. Res. Lett. 16, 861–879 (2009)
			\bibitem{mel}R. Melrose, Weyl's conjecture for manifolds with concave boundary, Proc. Sympos. Pure Math., vol.36, Amer. Math. Soc., Providence, RI, 1980, pp. 257-274.
			\bibitem{MTbook}R. Melrose and M. Taylor. Boundary problems for the wave equation with grazing and gliding rays. https://mtaylor.web.unc.edu/wp-content/uploads/sites/16915/2018/04/glide.pdf
			\bibitem{mu18}M. Mukherjee. Boundedness of spectral multipliers of generalized Laplacians on compact manifolds with boundary. Math. Z. (2018) 289:1011–1031
			\bibitem{OCP13} J. Ortega-Cerd\`a, B. Pridhnani, Carleson measures and Logvinenko-Sereda sets on compact manifolds. Forum Math. 25 (2013), 151-172.
				\bibitem{RT74}J. Rauch and M. Taylor, Exponential decay of solutions to hyperbolic equations in bounded domains, Indiana
			Univ. Math. J. 24 (1974), 79–86.
		%	\bibitem{LSC10}L. Saloff-Coste. The heat kernel and its estimates. Advanced Studies in Pure Mathematics 57, 2010
		%	Probabilistic Approach to Geometry
		%	pp. 405-436
		\bibitem{sy94}R. Schoen and S.-T. Yau, Lectures on Differential Geometry, Conference Proceedings
		and Lecture Notes in Geometry and Topology 1, International Press, Redwood
		City, CA, 1994.
		\bibitem{ss89}A. Seeger and C. Sogge. On the boundedness of functions of pseudodifferential operators on compact
		manifolds. Duke Math. J. 59, 709–736 (1989)
		\bibitem{seeley1}R. Seeley. A sharp asymptotic estimate for the eigenvalues of the Laplacian in a domain of $R^3$.
		Advances in Math., 102(3):244-264 (1978).
		\bibitem{seeley2}R. Seeley. An estimate near the boundary for the spectral function of the Laplace operator. Amer.
		J. Math., 102(3):869-902 (1980).
		\bibitem{ssjams}H. Smith and C.D. Sogge. 
		On the critical semilinear wave equation outside convex obstacles. J. Amer. Math. Soc.8(1995), no.4, 879–916.
		\bibitem{ssacta}H. Smith and C.D. Sogge. On the  $L^p$  norm of spectral clusters for compact manifolds with boundary. 
		Acta Math. 198 (2007), no. 1, 107–153.
		
		\bibitem{fio}C.D. Sogge. Fourier integrals in classical analysis, volume 210 of Cambridge Tracts in Mathematics.
		Cambridge University Press, Cambridge, second edition, 2017.
		\bibitem{hangzhou}C.D. Sogge. Hangzhou lectures on eigenfunctions of the Laplacian, volume 188 of Annals of Mathematics
		Studies. Princeton University Press, Princeton, NJ, 2014.
		\bibitem{soggeprob}C.D. Sogge. Problems related to the concentration of eigenfunctions. Journées équations aux dérivées partielles (2015), article no. 9, 11 p.
		\bibitem{stz11}C.D. Sogge, J.A. Toth and S. Zelditch. About the blowup of quasimodes
		on Riemannian manifolds. J. Geom. Anal. 21 (2011), 150–173.
		\bibitem{sz01}C.D. Sogge and S. Zelditch. Riemannian manifolds with maximal eigenfunction
		growth. Duke Math. J. 114 (2002), 387–437.
		\bibitem{sz16}C.D. Sogge and S. Zelditch. Focal points and sup-norms of eigenfunctions. Rev. Mat. Iberoam. 32 (2016), no. 3, 971–994.
		\bibitem{sz162}C.D. Sogge and S. Zelditch. Focal points and sup-norms of eigenfunctions II:
		the two-dimensional case. Rev. Mat. Iberoam. 32 (2016), no. 3, 995–999.
%\bibitem{soggeadv}C.D. Sogge, Localized $L^p$-estimates of eigenfunctions: a note on an article of Hezari and Riviere.
	%	Adv. Math. 289(2016), 384–396.
		\bibitem{taylornote}M. Taylor. $L^p$ bounds on functions of generalized Laplacians on a compact manifold with boundary.
		http://www.unc.edu/math/Faculty/met/lpmb.pdf
		\bibitem{taylor09}M. Taylor. Hardy spaces and BMO on manifolds with bounded geometry. J. Geom. Anal. 19(1), 137–190
		(2009)
		\bibitem{taylor89}M. Taylor. $L^p$ estimates on functions of the Laplace operator. Duke Math. J. 58, 773–793 (1989) 
		\bibitem{taylor009}M. Taylor. Functions of the Laplace operator on manifolds with lower Ricci and injectivity bounds.
		Commun. PDE 34, 1114–1126 (2009)
		\bibitem{wang97}J. Wang.
		Global heat kernel estimates.
		Pacific J. Math. 178 (1997), no. 2, 377–398.
		\bibitem{xu09}X. Xu. Gradient estimates for the eigenfunctions on compact manifolds with boundary and Hormander
		multiplier theorem. Forum Math. 21, 455–476 (2009)
		\bibitem{xu11}X. Xu. Eigenfunction estimates for Neumann Laplacian and applications to multiplier problems. Proc.
		AMS 139, 3583–3599 (2011)
		
		\bibitem{xu21}X. Xu. Characterization of Carleson Measures via Spectral Estimates on Compact Manifolds with Boundary. In:  Applied Mathematical Analysis and Computations I. SGMC 2021.
		\bibitem{zhangq02}Q. Zhang. The Boundary Behaviorof Heat Kernels of Dirichlet Laplacians. Journal of Differential Equations 182, 416–430 (2002)
		\end{thebibliography}
		
	\end{document}